\DeclarePairedDelimiterX\set[1]\lbrace\rbrace{\def\given{\;\delimsize\vert\;}#1}
\newtheorem{theorem}{Theorem}[section]
\newtheorem{corollary}[theorem]{Corollary}
\newtheorem{proposition}[theorem]{Proposition}
\newtheorem{lemma}[theorem]{Lemma}
\newtheorem{question}[theorem]{Question}
\theoremstyle{definition}
\newtheorem{definition}[theorem]{Definition}
\theoremstyle{remark}
\newtheorem{remark}[theorem]{Remark}
\let\phi=\varphi
\def\Min{\operatorname{Min}}
\let\oldbigwedge\bigwedge
\def\BIGwedge{{\textstyle\oldbigwedge}}
\def\medwedge{{\scriptstyle\oldbigwedge}}
\def\bigwedge{\mathchoice{\BIGwedge}{\BIGwedge}{\medwedge}{}}
\DeclareMathOperator{\Id}{Id}
\let\epsilon=\varepsilon
\let\c@equation\c@theorem
\numberwithin{equation}{section}
\begin{document}
	\title[2-absorbing ideals of semirings]{On 2-absorbing ideals of commutative semirings}
	
	\author{\bfseries H. Behzadipour}
	
	\address{School of Electrical and Computer Engineering\\ University College of Engineering\\ University of Tehran\\ Tehran\\ Iran}
	
	\email{hussein.behzadipour@gmail.com}
	
	\author{\bfseries P. Nasehpour}
	
	\address{Department of Engineering Science\\ Golpayegan University of Technology\\ Golpayegan\\
		Iran}
	\email{nasehpour@gut.ac.ir, nasehpour@gmail.com}
	
	\subjclass[2010]{16Y60; 13A15.}
	
	\keywords{semiring, 2-absorbing ideals, 2-AB semirings}
	\begin{abstract}

In this paper, we investigate 2-absorbing ideals of commutative semirings and prove that if $\mathfrak{a}$ is a nonzero proper ideal of a subtractive valuation semiring $S$ then $\mathfrak{a}$ is a 2-absorbing ideal of $S$ if and only if $\mathfrak{a}=\mathfrak{p}$ or $\mathfrak{a}=\mathfrak{p}^2$ where $\mathfrak{p}=\sqrt\mathfrak{a}$ is a prime ideal of $S$. We also show that each 2-absorbing ideal of a subtractive semiring $S$ is prime if and only if the prime ideals of $S$ are comparable and if $\mathfrak{p}$ is a minimal prime over a 2-absorbing ideal $\mathfrak{a}$, then $\mathfrak{am} = \mathfrak{p}$, where $\mathfrak{m}$ is the unique maximal ideal of $S$.
	\end{abstract}
	
	\maketitle
	
	\section{Introduction}
	
	Though in different references, the term ``semiring'' is applied for different meanings, in many references (see the explanations on page 3 of the book \cite{Glazek2002}), a semiring is an algebraic structure $(S,+,\cdot,0,1)$ with the following properties:
	
	\begin{enumerate}
		\item $(S,+,0)$ is a commutative monoid,
		\item $(S,\cdot,1)$ is a monoid with $1\neq 0$,
		\item $a(b+c) = ab+ac$ and $(b+c)a = ba+ca$ for all $a,b,c\in S$,
		\item $a\cdot 0 = 0\cdot a = 0$ for all $a\in S$. 
	\end{enumerate}

A semiring $S$ is commutative if $ab = ba$ for all $a,b\in S$. In this paper, all semirings are commutative. 

Semirings are important ring-like structures with many applications in science and engineering \cite[p. 225]{Golan2003} and considered to be interesting generalizations of bounded distributive lattices and commutative rings with nonzero identities \cite[Example 1.5]{Golan1999(b)}. For more on ring-like algebraic structures and their applications, one may refer to \cite{AhsanMordesonShabir2012,Bistarelli2004,Glazek2002,Golan1999(a),Golan1999(b),Golan2003,GondranMinoux2008,HebischWeinert1998,KuichSalomaa1986,Pilz1983}.

A nonempty subset $\mathfrak{a}$ of a semiring $S$ is said to be an ideal of $S$, if $x+y \in \mathfrak{a}$ for all $x,y \in \mathfrak{a}$ and $sx \in \mathfrak{a}$ for all $s \in S$ and $x \in \mathfrak{a}$ \cite{Bourne1951}. We denote the set of all ideals of $S$ by $\Id(S)$. It is easy to verify that $\Id(S)$ with the addition and multiplication of ideals, defined as follows, configures a semiring (check Example 1.4 and Proposition 6.29 in \cite{Golan1999(a)}): \begin{itemize}
	\item $I+J := \{a+b : a\in I, b\in J\},$
	\item $I\cdot J := \{\Sigma^n_{i=1} a_i b_i: a_i \in I, b_i \in J, n\in \mathbb N\}.$
\end{itemize}
An ideal $\mathfrak{a}$ of a semiring $S$ is called a proper ideal of the semiring $S$ if $\mathfrak{a} \neq S$. An ideal $\mathfrak{a}$ of a semiring $S$ is said to be subtractive if $x+y\in \mathfrak{a}$ and $x\in \mathfrak{a}$ imply $y\in \mathfrak{a}$ for all $x,y\in S$. A semiring $S$ is subtractive if each ideal of $S$ is subtractive \cite[\S 6]{Golan1999(b)}. It is clear that any (commutative) ring is subtractive.
	
Let us recall that by definition a proper ideal $\mathfrak{p}$ of a semiring $S$ is prime if $\mathfrak{ab} \subseteq \mathfrak{p}$ implies either $\mathfrak{a} \subseteq \mathfrak{p}$ or $\mathfrak{b} \subseteq \mathfrak{p}$. It is, then, easy to observe that a proper ideal $\mathfrak{p}$ of a semiring $S$ is prime if and only if $ab\in \mathfrak{p}$ implies either $a\in \mathfrak{p}$ or $b\in \mathfrak{p}$ \cite[Corollary 7.6]{Golan1999(b)}. A proper ideal $\mathfrak{m}$ is maximal if $\mathfrak{m} \subseteq \mathfrak{a} \subseteq S$ implies that either $\mathfrak{m} = \mathfrak{a}$ or $\mathfrak{a} = S$ for any ideal $\mathfrak{a}$ of $S$. It is easy to prove that any proper ideal of a semiring is contained in a maximal ideal and any maximal ideal of a semiring is prime. A semiring $S$ is called local if it has only one maximal ideal. It is easy to show that a semiring $S$ is local if and only if $S-U(S)$ is an ideal of $S$, where $U(S)$ is the set of all unit elements of $S$ (see Example 6.1 and Proposition 6.61 in \cite{Golan1999(b)}).
	
We also recall that an ideal $\mathfrak{q}$ of a semiring $S$ is called a primary ideal if $\mathfrak{q}$ is a proper ideal of $S$ and $xy\in \mathfrak{q}$ implies either $x\in \mathfrak{q}$ or $y^n \in \mathfrak{q}$ for some $n\in \mathbb N$ \cite[p. 92]{Golan1999(b)}. It is straightforward to see that if $\mathfrak{q}$ is a primary ideal of a semiring $S$, then $\sqrt\mathfrak{q}$ is the smallest prime ideal containing $\mathfrak{q}$.  In this case, if we set $\mathfrak{p} = \sqrt\mathfrak{q}$, then we say that $\mathfrak{q}$ is $\mathfrak{p}$-primary \cite{Nasehpour2018(a)}. For more on primary ideals of semirings, refer to \cite{Nasehpour2018(a)}.

Note that a semiring $S$ is a semidomain if it is multiplicatively cancellative, i.e. if $xy = xz$, and $x\neq 0$, then $y=z$ for all $x,y,z \in S$. A semiring $S$ is a valuation semiring if it is a semidomain and the set of its ideals $\Id(S)$ is totally ordered by inclusion. For more on valuation semirings, refer to \cite{Nasehpour2018(b)}. In Section \ref{sec:val}, we investigate ideals of valuation semirings.

Let us recall that, in commutative algebra, a prime ideal $\mathfrak{p}$ of a domain $R$ is called divided if in $R_\mathfrak{p}$, we have the following equality: \[\set{z/s \given z\in \mathfrak{p}, s\notin \mathfrak{p}} = \set{z/1 \given z\in \mathfrak{p}}.\] It is easy to verify that a prime ideal $\mathfrak{p}$ of a ring $R$ is a divided prime ideal if and only if $\mathfrak{p} \subset (x)$ for every $x \in R-\mathfrak{p}$. We also note that a domain $D$ is called divided if each prime ideal of $D$ is divided \cite{Dobbs1976}. Divided domains are called AV-domains by Akiba in \cite{Akiba1967}. Inspired by this, we define a prime ideal $\mathfrak{p}$ of a semiring $S$ to be a divided prime ideal of $S$ if $\mathfrak{p} \subset (x)$ for every $x \in S-\mathfrak{p}$ (check Definition \ref{dividedprimeDef}). In Proposition \ref{valuationisdivided}, we prove that every valuation semiring is divided.

We also recall that a proper ideal $\mathfrak{a}$ of a semiring $S$ is called 2-absorbing ideal of $S$ if $xyz\in \mathfrak{a}$ implies either $xy\in \mathfrak{a}$, or $yz \in \mathfrak{a}$, or $xz\in \mathfrak{a}$ \cite[Definition 2.1]{Darani2012}. Section \ref{sec:2-Abs} is devoted to studying 2-absorbing ideals of semirings. For example in Theorem \ref{p2subsetofideal}, we prove that if $\mathfrak{a}$ is a 2-absorbing ideal of a subtractive semiring $S$, then one of the following statements must hold:
\begin{enumerate}
			
		\item $\sqrt{\mathfrak{a}}=\mathfrak{p}$ is a prime ideal of $S$ such that $\mathfrak{p}^2 \subseteq \mathfrak{a}$.
			
		\item $\sqrt{\mathfrak{a}} = \mathfrak{p}_1 \cap \mathfrak{p}_2 $, $\mathfrak{p}_1 \mathfrak{p}_2 \subseteq \mathfrak{a}$ and $(\sqrt{\mathfrak{a}})^2 \subseteq \mathfrak{a}$ where $\mathfrak{p}_1$, $\mathfrak{p}_2$ are the only distinct prime ideals of $S$ that are minimal over $\mathfrak{a}$.
\end{enumerate} 
We also show in Theorem \ref{dividedprimeThm1} that if $\mathfrak{p}$ is a nonzero divided prime ideal of a subtractive semiring $S$ and $\mathfrak{a}$ is an ideal of $S$ such that $\sqrt{\mathfrak{a}}=\mathfrak{p}$, then the following statements are equivalent:
\begin{enumerate}
	\item $\mathfrak{a}$ is a 2-absorbing ideal of $S$;
	\item $\mathfrak{a}$ is a $\mathfrak{p}$-primary ideal of $S$ such that $\mathfrak{p}^2 \subseteq \mathfrak{a}$.
\end{enumerate}

We also prove that if $\mathfrak{p}$ is a nonzero divided prime ideal of a subtractive semidomain $S$, then $\mathfrak{p}^2$ is a 2-absorbing ideal of $S$ (see Theorem \ref{dividedprimeThm2}).

Finally, we add that with the help of the results in Section \ref{sec:val}, we show in Theorem \ref{2-absorbing} that if $S$ is a subtractive valuation semiring and $\mathfrak{a}$ is a nonzero proper ideal of $S$, then the following statements are equivalent:
\begin{enumerate}
	\item $\mathfrak{a}$ is a 2-absorbing ideal of $S$;
	\item $\mathfrak{a}$ is a $\mathfrak{p}$-primary ideal of $S$ such that $\mathfrak{p}^2\subseteq\mathfrak{a}$;
	\item $\mathfrak{a}=\mathfrak{p}$ or $\mathfrak{a}=\mathfrak{p}^2$ where $\mathfrak{p}=\sqrt\mathfrak{a}$ is a prime ideal of $S$.
\end{enumerate}

Note that examples for subtractive valuation semirings include the semiring $\Id(D)$, where $D$ is a Dedekind domain (see Theorem 3.8 and Proposition 3.10 in \cite{Nasehpour2018(b)}).\\

It is obvious that each prime ideal of a semiring is 2-absorbing, though the converse of this statement is not correct. Therefore, it is natural to investigate those semirings in which every 2-absorbing ideal is prime. We define a semiring $S$ to be a 2-AB semiring if every 2-absorbing ideal of $S$ is prime (see Definition \ref{2-ABdef}). We devote Section \ref{sec:2-AB} to 2-AB semirings and in Theorem \ref{2-ABThm2}, we show that if $S$ is a subtractive semiring, then the following statements are equivalent:	
	
	\begin{enumerate}
		
		\item The semiring $S$ is 2-AB.
		\item (a) The prime ideals of $S$ are comparable and (b) if $\mathfrak{p}$ is a minimal prime over a 2-absorbing ideal $\mathfrak{a}$, then $\mathfrak{am} = \mathfrak{p}$, where $\mathfrak{m}$ is the unique maximal ideal of $S$.
		
		\item (a) The prime ideals of $S$ are comparable and (b) for every prime ideal $\mathfrak{p}$ of $S$, $2-\Min_S(\mathfrak{p}^2)=\set{\mathfrak{p}}$.
	\end{enumerate}
Assume that $\mathfrak{a}$ is an ideal of a semiring $S$. We define a 2-absorbing ideal $\mathfrak{p}$ of $S$ to be a minimal 2-absorbing ideal over $\mathfrak{a}$ if there is not a 2-absorbing ideal $\mathfrak{q}$ of $S$ such that $\mathfrak{a}\subseteq\mathfrak{q}\subset\mathfrak{p}$. We denote the set of minimal 2-absorbing ideals over $\mathfrak{a}$ by $2-\Min_S(\mathfrak{a})$ (See Definition \ref{2-MinDef}).

 It is worth mentioning that in Section \ref{sec:2-AB}, we also prove that if $S$ is a subtractive valuation semiring, then, $S$ is 2-AB if and only if $\mathfrak{p}^2 = \mathfrak{p}$ for every prime ideal $\mathfrak{p}$ of $S$ (check Theorem \ref{2-ABThm3}).

Finally, the reader is warned that we use ``$\subseteq$'' for inclusion and ``$\subset$'' for strict inclusion \cite[p. 17]{Monk1969}.

	\section{Ideals of valuation semirings}\label{sec:val}
	
	Let us recall that a semidomain (i.e. multiplicatively cancellative semiring) is a valuation semiring if and only if the ideals of $S$ are totally ordered by inclusion \cite[Theorem 2.4]{Nasehpour2018(b)}. We also recall that by a fractional ideal of a semidomain $S$, we mean a subset $I$ of the semifield of fractions $F(S)$ such that the following conditions are satisfied:
		
		\begin{enumerate}
			
			\item I is an $S$-subsemimodule of $F(S)$, that is, if $a, b \in I$ and $ s \in S$, then $a + b \in I $ and $sa \in I$.
			
			\item There exists a nonzero element $d\in S$ such that $dI \subseteq S$.
			
		\end{enumerate}
	Also, a fractional ideal $I$ of a semidomain $S$ is invertible if there exists a fractional ideal $J$ of $S$ such that $IJ=S$ \cite{GhalandarzadehNasehpourRazavi2017}. It is clear that any invertible ideal $I$ is cancellation, i.e. $IJ =IK$ implies $J=K$, for all ideals $J$ and $K$ of $S$ \cite{LaGrassa1995}. First, we prove the following lemma:
	
	\begin{lemma}
		
		\label{idealsofvaluation1}
		
		Let $S$ be a valuation semiring and $\mathfrak{a}$ a proper ideal of $S$. Then, the following statements hold:
		
		\begin{enumerate}
			
			\item $\bigcap^{\infty}_{n=1} \mathfrak{a}^n = \mathfrak{c}$ is a prime ideal of $S$.
			
			\item If $\mathfrak{b}$ is an ideal of $S$ such that $\mathfrak{a} \subset \sqrt\mathfrak{b}$, then $\mathfrak{b}$ contains a power of $\mathfrak{a}$.
		\end{enumerate}
		
		\begin{proof}
			
			(1): Suppose $s,t\in S$ are not elements of $\bigcap^{\infty}_{n=1} \mathfrak{a}^n = \mathfrak{c}$. Therefore, there are natural numbers $m,n \in \mathbb N$ such that $s \notin \mathfrak{a}^m$ and $t \notin \mathfrak{a}^n$. Since $S$ is a valuation semiring, $\mathfrak{a}^m \subset (s)$ and $\mathfrak{a}^n \subset (t)$. Since $(t)$ is an invertible ideal of $S$ \cite[Proposition 1.4]{GhalandarzadehNasehpourRazavi2017}, $\mathfrak{a}^m (t) \subset (st)$. On the other hand, since $\mathfrak{a}^m \subset (s)$, $\mathfrak{a}^{m+n} \subseteq \mathfrak{a}^n (s)$. So, $\mathfrak{a}^{m+n} \subset (st)$, which means that $st \notin \mathfrak{c}$.
			
			(2): If $\mathfrak{b}$ contains no power of $\mathfrak{a}$, then $\mathfrak{b} \subseteq \mathfrak{a}^n$, for each $n\in \mathbb N$. Therefore, $\mathfrak{b} \subseteq \mathfrak{c}$. Since $\mathfrak{c}$ is prime, $\sqrt\mathfrak{b} \subseteq \mathfrak{c} \subseteq \mathfrak{a}$. This finishes the proof. \end{proof}
	\end{lemma}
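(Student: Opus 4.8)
The plan is to handle the two parts separately, in both cases exploiting that the ideals of a valuation semiring $S$ are totally ordered by inclusion, together with the invertibility of principal ideals.

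For part (1), I would first note that $\mathfrak{c}=\bigcap_{n=1}^{\infty}\mathfrak{a}^n$ is an ideal, being an intersection of ideals, and that it is proper, since $\mathfrak{c}\subseteq\mathfrak{a}\subset S$. To prove primeness I would use the elementwise characterization recalled in the introduction and argue by contraposition: assuming $s,t\notin\mathfrak{c}$, I must produce a power of $\mathfrak{a}$ missing $st$. Choosing $m,n\in\mathbb{N}$ with $s\notin\mathfrak{a}^m$ and $t\notin\mathfrak{a}^n$, the total ordering of the ideals forces $\mathfrak{a}^m$ and the principal ideal $(s)$ to be comparable; since $s\notin\mathfrak{a}^m$ the containment $(s)\subseteq\mathfrak{a}^m$ is impossible, so $\mathfrak{a}^m\subset(s)$, and likewise $\mathfrak{a}^n\subset(t)$.

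The crux of part (1) is to promote these two strict containments to the single strict containment $\mathfrak{a}^{m+n}\subset(st)$. Here I would invoke that the principal ideal $(t)$ is invertible, hence cancellative, so multiplying the strict inclusion $\mathfrak{a}^m\subset(s)$ by $(t)$ preserves strictness and yields $\mathfrak{a}^m(t)\subset(s)(t)=(st)$; combining with $\mathfrak{a}^{m+n}=\mathfrak{a}^m\mathfrak{a}^n\subseteq\mathfrak{a}^m(t)$ gives $\mathfrak{a}^{m+n}\subset(st)$. Finally, a strict inclusion of an ideal into the principal ideal $(st)$ rules out $st\in\mathfrak{a}^{m+n}$, for otherwise $(st)\subseteq\mathfrak{a}^{m+n}$, contradicting strictness; since $\mathfrak{c}\subseteq\mathfrak{a}^{m+n}$ we conclude $st\notin\mathfrak{c}$. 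I expect the main obstacle to be exactly this bookkeeping with strict versus non-strict inclusions, and in particular the justification that multiplication by an invertible ideal is strictness-preserving via the cancellation property; everything else is a direct appeal to the total order.

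For part (2) I would again argue by contraposition, now using part (1). If $\mathfrak{b}$ contains no power of $\mathfrak{a}$, then for each $n$ the comparability of $\mathfrak{b}$ and $\mathfrak{a}^n$ forces $\mathfrak{b}\subseteq\mathfrak{a}^n$, whence $\mathfrak{b}\subseteq\bigcap_{n=1}^{\infty}\mathfrak{a}^n=\mathfrak{c}$. Taking radicals and using that $\mathfrak{c}$ is prime, so that $\sqrt{\mathfrak{c}}=\mathfrak{c}$, I obtain $\sqrt{\mathfrak{b}}\subseteq\mathfrak{c}\subseteq\mathfrak{a}$, which contradicts the hypothesis $\mathfrak{a}\subset\sqrt{\mathfrak{b}}$. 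Hence some power of $\mathfrak{a}$ must lie in $\mathfrak{b}$, as claimed.
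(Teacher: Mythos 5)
Your proof is correct and follows essentially the same route as the paper: establish $\mathfrak{a}^m\subset(s)$ and $\mathfrak{a}^n\subset(t)$ from the total order, use invertibility (hence cancellation) of a principal ideal to preserve strictness under multiplication, conclude $\mathfrak{a}^{m+n}\subset(st)$ and hence $st\notin\mathfrak{c}$, and then deduce (2) from (1) by contraposition exactly as the paper does. Your bookkeeping of the chain $\mathfrak{a}^{m+n}\subseteq\mathfrak{a}^m(t)\subset(st)$ is in fact slightly cleaner than the paper's wording, but the argument is the same.
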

	
	\begin{theorem}
		
		\label{idealsofvaluation2}
		
		Let $\mathfrak{p}$ be a proper prime ideal of a valuation semiring $S$.
		\begin{enumerate}
			\item If $\mathfrak{q}$ is $\mathfrak{p}$-primary and $x \in S-\mathfrak{p}$, then $\mathfrak{q}=\mathfrak{q}(x)$.
			\item A finite nonempty product of $\mathfrak{p}$-primary ideals of $S$ is a $\mathfrak{p}$-primary ideal. If $\mathfrak{p} \neq \mathfrak{p}^2$, then the only $\mathfrak{p}$-primary ideals of $S$ are powers of $\mathfrak{p}$.
		\end{enumerate}
		
		\begin{proof}
			(1): As $x \notin \mathfrak{p}$, $\mathfrak{q} \subset (x)$. Suppose that $K$ is the quotient semifield of $S$ and $A=\set{ y \colon y \in K, xy \in \mathfrak{q} }$. As $\mathfrak{q} \subset (x)$, $A$ is a subset of $S$. Now, it is easy to verify that $A$ is, in fact, an ideal of $S$. Our claim is that $\mathfrak{q}=A(x)$. First we prove that $A(x) \subseteq \mathfrak{q}$. Let $s\in A(x)$. Since $A$ is an ideal of $S$, we have that $s =ax$, where $a\in A$. But $s=ax$ is an element of $\mathfrak{q}$ by the definition of $A$. The proof of the other containment is very easy if one considers the definition of $A$ and this point that $\mathfrak{q} \subset (x)$.
			
			Now, we prove that $A \subseteq \mathfrak{q}$. Let $a\in A$. So, $ax\in \mathfrak{q}$, by definition of $A$. Since $\mathfrak{q}$ is $\mathfrak{p}$-primary, we have that either $a\in \mathfrak{q}$ or $x\in \mathfrak{p}=\sqrt{\mathfrak{q}}$. But by assumption, $x \notin \mathfrak{p}$. So, $a\in \mathfrak{q}$ and this shows that $A \subseteq \mathfrak{q}$. Thus $\mathfrak{q}=A$ and $\mathfrak{q}=\mathfrak{q}(x)$.
			\newline
			(2): Suppose that $\mathfrak{q}_1,\mathfrak{q}_2$ are $\mathfrak{p}$-primary ideals of $S$. It is straightforward to see that  $\sqrt{\mathfrak{q}_1\mathfrak{q}_2}=\mathfrak{p}$. Suppose that $x,y \in S$ and $xy \in \mathfrak{q}_1 \mathfrak{q}_2$ and $x \notin \mathfrak{p}$. By the first part of this theorem, we have $\mathfrak{q}_1=\mathfrak{q}_1(x)$. Therefore, $xy \in (x)\mathfrak{q}_1\mathfrak{q}_2$. As $S$ is a semidomain, $y\in \mathfrak{q}_1 \mathfrak{q}_2$. Thus $\mathfrak{q}_1\mathfrak{q}_2$ is $\mathfrak{p}$-primary.
			\newline
			Now let $\mathfrak{p} \neq \mathfrak{p}^2$ and $\mathfrak{q}$ be a $\mathfrak{p}$-primary ideal of $S$. By Lemma \ref{idealsofvaluation1}, $\mathfrak{q}$ contains a power of $\mathfrak{p}^2$ and so contains a power of $\mathfrak{p}$. Thus, there is a positive integer $m$ such that $\mathfrak{p}^m\subseteq\mathfrak{q}$ and $\mathfrak{p}^{m-1} \nsubseteq \mathfrak{q}$. Suppose that $x \in \mathfrak{p}^{m-1}, x \notin \mathfrak{q}$. Clearly, $\mathfrak{q}\subset(x)$. We define $A=\set{ y \given y \in K, xy \in \mathfrak{q} }$, so $\mathfrak{q}=A(x)$. As $\mathfrak{q}$ is $\mathfrak{p}$-primary and $x \notin \mathfrak{q}$, we have $A \subseteq \mathfrak{p}$. Thus, $\mathfrak{q}=A(x)\subseteq\mathfrak{p}(x)\subseteq\mathfrak{p}^m$, and so we conclude that $\mathfrak{q}=\mathfrak{p}^m$.
		\end{proof}
	\end{theorem}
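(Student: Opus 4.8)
The plan is to prove both parts by exploiting the colon-type ideal $A=\set{y \given y\in K,\ xy\in\mathfrak{q}}$, where $K$ is the quotient semifield of $S$, together with the two structural features available: the total ordering of $\Id(S)$ and multiplicative cancellation.

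For (1), I would first record that $\mathfrak{q}\subseteq (x)$. This follows because $\Id(S)$ is totally ordered: either $(x)\subseteq\mathfrak{q}$ or $\mathfrak{q}\subseteq(x)$, and the former is impossible, since it would force $x\in\mathfrak{q}\subseteq\sqrt{\mathfrak{q}}=\mathfrak{p}$, contradicting $x\notin\mathfrak{p}$. This containment does double duty: it shows $A\subseteq S$ (if $xa\in\mathfrak{q}\subseteq xS$, cancel $x$ to land $a\in S$) and makes $A$ an honest ideal of $S$. Next I would check the two inclusions giving $\mathfrak{q}=A(x)$ directly from the definition of $A$ and from $\mathfrak{q}\subseteq(x)$. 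The decisive step is $A=\mathfrak{q}$: the inclusion $\mathfrak{q}\subseteq A$ is immediate, since any $q\in\mathfrak{q}$ has $xq\in\mathfrak{q}$, while $A\subseteq\mathfrak{q}$ is exactly where primariness enters — for $a\in A$ we have $ax\in\mathfrak{q}$, so $a\in\mathfrak{q}$ or $x\in\sqrt{\mathfrak{q}}=\mathfrak{p}$, and the second alternative is excluded by hypothesis. Substituting $A=\mathfrak{q}$ into $\mathfrak{q}=A(x)$ yields $\mathfrak{q}=\mathfrak{q}(x)$.

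For the first assertion of (2), I would argue by induction and so reduce to a product $\mathfrak{q}_1\mathfrak{q}_2$ of two $\mathfrak{p}$-primary ideals. That $\sqrt{\mathfrak{q}_1\mathfrak{q}_2}=\mathfrak{p}$ is a routine radical computation. For primariness, take $xy\in\mathfrak{q}_1\mathfrak{q}_2$ with $x\notin\mathfrak{p}$; invoking part (1) on $\mathfrak{q}_1$ gives $\mathfrak{q}_1=(x)\mathfrak{q}_1$, hence $\mathfrak{q}_1\mathfrak{q}_2=(x)\mathfrak{q}_1\mathfrak{q}_2$, so $xy=xw$ for some $w\in\mathfrak{q}_1\mathfrak{q}_2$, and multiplicative cancellation delivers $y=w\in\mathfrak{q}_1\mathfrak{q}_2$.

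For the classification when $\mathfrak{p}\neq\mathfrak{p}^2$, I would first trap $\mathfrak{q}$ between consecutive powers of $\mathfrak{p}$. Since $\mathfrak{p}^2\subset\mathfrak{p}=\sqrt{\mathfrak{q}}$, Lemma \ref{idealsofvaluation1}(2) shows $\mathfrak{q}$ contains a power of $\mathfrak{p}^2$, hence a power of $\mathfrak{p}$; let $m$ be least with $\mathfrak{p}^m\subseteq\mathfrak{q}$, choose $x\in\mathfrak{p}^{m-1}\setminus\mathfrak{q}$, and form $A$ as above so that $\mathfrak{q}=A(x)$. The crux — and the step I expect to be the main obstacle — is proving $A\subseteq\mathfrak{p}$. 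Here the naive primariness argument fails: from $ax\in\mathfrak{q}$ the alternative $x\in\sqrt{\mathfrak{q}}=\mathfrak{p}$ is automatically true, as $x\in\mathfrak{p}^{m-1}\subseteq\mathfrak{p}$, so it yields no information about $a$. Instead I would argue by contradiction through part (1): if some $a\in A$ had $a\notin\mathfrak{p}$, then $\mathfrak{q}=\mathfrak{q}(a)=a\mathfrak{q}$, and $ax\in\mathfrak{q}=a\mathfrak{q}$ combined with cancellation would force $x\in\mathfrak{q}$, contradicting the choice of $x$. With $A\subseteq\mathfrak{p}$ in hand, $\mathfrak{q}=A(x)\subseteq\mathfrak{p}(x)\subseteq\mathfrak{p}\cdot\mathfrak{p}^{m-1}=\mathfrak{p}^m\subseteq\mathfrak{q}$, whence $\mathfrak{q}=\mathfrak{p}^m$.
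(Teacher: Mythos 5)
Your proof is correct and follows the paper's strategy almost exactly: the same colon ideal $A=\{y\in K : xy\in\mathfrak{q}\}$, the same identity $\mathfrak{q}=A(x)$, the same use of part (1) plus cancellation for products, and the same trapping of $\mathfrak{q}$ between $\mathfrak{p}^{m-1}$ and $\mathfrak{p}^m$. The one place you genuinely diverge is the step $A\subseteq\mathfrak{p}$ in the classification, and there your diagnosis is mistaken: the ``naive primariness argument'' does not fail. Since the semiring is commutative, primariness can be applied with the two factors in either order: from $ax\in\mathfrak{q}$ one may conclude ``$a\in\mathfrak{q}$ or $x\in\sqrt{\mathfrak{q}}$'' (which, as you observe, gives nothing because $x\in\mathfrak{p}^{m-1}\subseteq\mathfrak{p}$), but equally ``$x\in\mathfrak{q}$ or $a^n\in\mathfrak{q}$ for some $n$'' --- and since $x$ was chosen with $x\notin\mathfrak{q}$, this second reading immediately yields $a\in\sqrt{\mathfrak{q}}=\mathfrak{p}$. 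That one-line argument is precisely what the paper uses (``As $\mathfrak{q}$ is $\mathfrak{p}$-primary and $x\notin\mathfrak{q}$, we have $A\subseteq\mathfrak{p}$''). Your workaround --- if some $a\in A$ had $a\notin\mathfrak{p}$, then part (1) gives $\mathfrak{q}=\mathfrak{q}(a)=a\mathfrak{q}$, so $ax=aq'$ for some $q'\in\mathfrak{q}$ and cancellation (valid since $0\in\mathfrak{p}$ forces $a\neq 0$) gives $x\in\mathfrak{q}$, a contradiction --- is perfectly sound, so your proof stands; it simply trades a symmetric application of the definition of primary for a detour through part (1). Everything else (the explicit exclusion of $(x)\subseteq\mathfrak{q}$ via the total ordering, the induction for finitely many factors, the cancellation details) matches the paper or fills in steps it leaves implicit.
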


Now, we bring the definition of divided primes and semirings:

\begin{definition}
	
	\label{dividedprimeDef}
	
	We define a prime ideal $\mathfrak{p}$ of a semiring $S$ to be a divided prime ideal of $S$ if $\mathfrak{p} \subset (x)$ for every $x \in S-\mathfrak{p}$. We call a semiring $S$ divided if each prime ideal of $S$ is divided.
\end{definition}

\begin{proposition}
	
	\label{valuationisdivided}
	Any valuation semiring is divided.
	\begin{proof}
		
		Let $S$ be a valuation semiring and $\mathfrak{p}$ be a prime ideal of $S$. Also, let $x\notin \mathfrak{p}$. This means that the principal ideal $(x)$ is not a subset of $\mathfrak{p}$. Since in valuation semirings, ideals are totally ordered by inclusion \cite[Theorem 2.4]{Nasehpour2018(b)}, $\mathfrak{p} \subset (x)$ and the proof is complete.
	\end{proof}
\end{proposition}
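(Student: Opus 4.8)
The plan is to take an arbitrary prime ideal $\mathfrak{p}$ of the valuation semiring $S$ together with an arbitrary element $x \in S - \mathfrak{p}$, and to verify the defining inclusion $\mathfrak{p} \subset (x)$ of Definition \ref{dividedprimeDef} directly, exploiting the fact that the ideals of $S$ are totally ordered by inclusion (the characterization of valuation semirings recorded at the start of this section via \cite[Theorem 2.4]{Nasehpour2018(b)}). So the entire argument should reduce to a comparison of the two ideals $\mathfrak{p}$ and $(x)$.

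First I would observe that $(x) \nsubseteq \mathfrak{p}$: indeed, if we had $(x) \subseteq \mathfrak{p}$, then since $x \in (x)$ we would get $x \in \mathfrak{p}$, contradicting the choice $x \in S - \mathfrak{p}$. Next, because the ideals of $S$ are totally ordered, the two ideals $\mathfrak{p}$ and $(x)$ must be comparable, so either $(x) \subseteq \mathfrak{p}$ or $\mathfrak{p} \subseteq (x)$. Having just excluded the first alternative, I conclude $\mathfrak{p} \subseteq (x)$.

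Finally, to upgrade this to the strict inclusion demanded by the definition of a divided prime, I would again use the witness $x$: since $x \in (x)$ but $x \notin \mathfrak{p}$, the ideals $\mathfrak{p}$ and $(x)$ are distinct, whence $\mathfrak{p} \subset (x)$. As $x$ was arbitrary in $S - \mathfrak{p}$ and $\mathfrak{p}$ an arbitrary prime ideal, every prime ideal of $S$ is divided, and therefore $S$ is divided.

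There is essentially no serious obstacle here; the whole content is carried by the total-ordering hypothesis, and the proof is a short chain of set-theoretic comparisons. The only point requiring a small amount of care is the distinction between $\subseteq$ and the strict $\subset$ appearing in Definition \ref{dividedprimeDef}: the total ordering alone yields only $\mathfrak{p} \subseteq (x)$, and one must invoke the specific element $x$ a second time to rule out equality and obtain strictness.
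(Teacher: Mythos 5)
Your proposal is correct and follows essentially the same argument as the paper: use total ordering of ideals in a valuation semiring to compare $\mathfrak{p}$ and $(x)$, rule out $(x) \subseteq \mathfrak{p}$ via the element $x$, and conclude $\mathfrak{p} \subset (x)$. The only difference is that you spell out the strictness of the inclusion explicitly, which the paper leaves implicit.
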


One may define the localization of semirings similar to ring theory \cite{Kim1985,Nasehpour2018(a)}. It is, then, straightforward to see that a prime ideal $\mathfrak{p}$ of a semiring $S$ is divided if and only if in $S_\mathfrak{p}$, we have the following equality: \[\set{z/s \given z\in \mathfrak{p}, s\notin \mathfrak{p}} = \set{z/1 \given z\in \mathfrak{p}}.\] By using this point, one can easily prove the following:

\begin{proposition}
	Let $S$ be a divided semiring. Then, each arbitrary ideal of $S$ is comparable with each prime ideal of $S$. In particular, prime ideals of $S$ are comparable and $S$ is local.
\end{proposition}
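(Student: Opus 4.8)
The plan is to prove the comparability statement first, since the two ``in particular'' assertions then follow almost formally. Let $\mathfrak{a}$ be an arbitrary ideal of $S$ and let $\mathfrak{p}$ be a prime ideal of $S$; I want to show that either $\mathfrak{a} \subseteq \mathfrak{p}$ or $\mathfrak{p} \subseteq \mathfrak{a}$. The natural case split is on whether $\mathfrak{a}$ is contained in $\mathfrak{p}$: if it is, there is nothing to prove, so all the content lies in the complementary case.

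So suppose $\mathfrak{a} \nsubseteq \mathfrak{p}$ and fix a witness element $x \in \mathfrak{a}$ with $x \notin \mathfrak{p}$. Since $S$ is divided, the prime ideal $\mathfrak{p}$ is divided, and as $x \in S-\mathfrak{p}$ this gives $\mathfrak{p} \subset (x)$. On the other hand, because $x \in \mathfrak{a}$ and $\mathfrak{a}$ is an ideal, the principal ideal $(x)$ is contained in $\mathfrak{a}$; here I would note that $(x) = Sx \subseteq \mathfrak{a}$, using closure of $\mathfrak{a}$ under multiplication by elements of $S$ and under addition. Chaining the two inclusions yields $\mathfrak{p} \subset (x) \subseteq \mathfrak{a}$, whence $\mathfrak{p} \subseteq \mathfrak{a}$. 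This settles the comparability of $\mathfrak{a}$ with $\mathfrak{p}$.

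For the first ``in particular'' claim I would simply apply the statement just proved, taking $\mathfrak{a}$ to be a second prime ideal $\mathfrak{q}$: every prime ideal is in particular an ideal, so $\mathfrak{q}$ and $\mathfrak{p}$ are comparable, i.e. the prime ideals of $S$ form a chain. For locality, recall from the introduction that every maximal ideal is a proper prime ideal. Given two maximal ideals $\mathfrak{m}_1$ and $\mathfrak{m}_2$, comparability of primes gives, say, $\mathfrak{m}_1 \subseteq \mathfrak{m}_2 \subset S$; maximality of $\mathfrak{m}_1$ applied to $\mathfrak{m}_1 \subseteq \mathfrak{m}_2 \subseteq S$ then forces $\mathfrak{m}_1 = \mathfrak{m}_2$. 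Hence $S$ has a unique maximal ideal and is local.

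There is essentially no hard step here: the entire argument is the one-line deduction $\mathfrak{p} \subset (x) \subseteq \mathfrak{a}$, which converts the divided hypothesis into comparability. The only points requiring any care are the routine verification that $(x) \subseteq \mathfrak{a}$ whenever $x \in \mathfrak{a}$, which is immediate from the definition of an ideal, and the invocation in the locality argument of the facts already recorded in the introduction, namely that maximal ideals exist, are proper, and are prime.
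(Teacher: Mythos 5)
Your proof is correct, and it is worth noting how it relates to what the paper actually does: the paper states this proposition \emph{without} a proof, remarking only that it can ``easily'' be deduced from the preceding observation that a prime $\mathfrak{p}$ is divided if and only if $\set{z/s \given z\in \mathfrak{p},\, s\notin \mathfrak{p}} = \set{z/1 \given z\in \mathfrak{p}}$ holds in the localization $S_\mathfrak{p}$. You bypass localization entirely and argue straight from Definition \ref{dividedprimeDef}: if $\mathfrak{a} \nsubseteq \mathfrak{p}$, pick $x \in \mathfrak{a}-\mathfrak{p}$, use dividedness to get $\mathfrak{p} \subset (x)$, and chain with $(x) = Sx \subseteq \mathfrak{a}$ to conclude $\mathfrak{p} \subseteq \mathfrak{a}$; comparability of primes and uniqueness of the maximal ideal (which exists since $(0)$ is proper and every proper ideal lies in a maximal one) then follow formally, exactly as you say. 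Your route is the more elementary and self-contained one --- it needs no theory of localization of semirings, which the paper only sketches by reference --- whereas the paper's suggested route is really a conceptual bridge to the ring-theoretic definition of divided domains (which is phrased via $R_\mathfrak{p}$); as a proof of this particular proposition, the localization detour adds nothing, and in a general semiring one must even be slightly careful that passing between the two characterizations of dividedness behaves as claimed. The one point to keep explicit in your write-up is the (easy) fact that $(x) = Sx$ is indeed the smallest ideal containing $x$, so that $x \in \mathfrak{a}$ gives $(x) \subseteq \mathfrak{a}$; you address this, so the argument is complete.
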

	
	\section{On 2-Absorbing Ideals of Semirings}\label{sec:2-Abs}
	
The concept of 2-absorbing ideals of semirings was introduced by Darani \cite[Definition 2.1]{Darani2012}. His definition is the semiring version of the 2-absorbing ideals of rings introduced by Badawi \cite{Badawi2007}. We recall this concept in the following:

	\begin{definition}
		
		\label{2-absorbingdef}
		
		A proper ideal $\mathfrak{a}$ of a semiring $S$ is called 2-absorbing ideal of $S$ if $xyz\in \mathfrak{a}$ implies either $xy\in \mathfrak{a}$, or $yz \in \mathfrak{a}$, or $xz\in \mathfrak{a}$.
	\end{definition}

Here is a good place to mention that prime ideals of a semiring are 2-absorbing. Now, we give some other methods to make 2-absorbing ideals:
	
	\begin{proposition}
		
		\label{2-absorbingintersection}
		
		If  $\mathfrak{p}_1$ and $\mathfrak{p}_2$ are both prime ideals of a semiring $S$, then $\mathfrak{p}_1 \cap \mathfrak{p}_2$ is a 2-absorbing ideal of $S$.
		\begin{proof}
			Let $x_1 x_2 x_3 \in \mathfrak{p}_1 \cap \mathfrak{p}_2$ for the elements $x_1, x_2, x_3 \in S$. Then, $x_1 x_2 x_3 \in \mathfrak{p}_1$ and $x_1 x_2 x_3 \in \mathfrak{p}_2$. Since $\mathfrak{p}_1$ is prime, either $x_1 \in \mathfrak{p}_1$, or $x_2 \in \mathfrak{p}_1$, or $x_3 \in \mathfrak{p}_1$. Similarly, one of these three elements is in $\mathfrak{p}_2$. Suppose that $x_1 \in \mathfrak{p}_1$. Now, if $x_1\in \mathfrak{p}_2$, then $x_1 x_2$ is in the both $\mathfrak{p}_1$ and $\mathfrak{p}_2$, so $x_1x_2 \in \mathfrak{p}_1 \cap \mathfrak{p}_2$. If $x_2\in \mathfrak{p}_2$ then $x_1 x_2\in \mathfrak{p}_1 \cap \mathfrak{p}_2$. And if $x_3\in \mathfrak{p}_2$, then $x_1 x_3\in \mathfrak{p}_1 \cap \mathfrak{p}_2$. The same proof is valid for the cases $x_2 \in \mathfrak{p}_1$ and $x_3 \in \mathfrak{p}_1$. Therefore, there are two of these three elements whose product is always in $\mathfrak{p}_1 \cap \mathfrak{p}_2$. Hence, $\mathfrak{p}_1 \cap \mathfrak{p}_2$ is a 2-absorbing ideal of $S$ and the proof is complete.    
		\end{proof}
		
	\end{proposition}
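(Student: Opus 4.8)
The plan is to verify the two defining requirements of a 2-absorbing ideal directly, leaning on the element-wise description of primeness recalled in the introduction. First I would note the routine part: $\mathfrak{p}_1 \cap \mathfrak{p}_2$ is an ideal, since an intersection of ideals is again an ideal, and it is proper because it is contained in $\mathfrak{p}_1 \neq S$. All the content lies in the absorbing condition.

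Before attacking that condition, I would record the key auxiliary fact that a prime ideal absorbs a triple product into one of its single factors: if $\mathfrak{p}$ is prime and $abc \in \mathfrak{p}$, then applying the element-wise characterization of primeness to the factorization $a\cdot(bc)$ gives $a \in \mathfrak{p}$ or $bc \in \mathfrak{p}$, and in the latter case a second application to $b \cdot c$ gives $b \in \mathfrak{p}$ or $c \in \mathfrak{p}$. Hence at least one of $a,b,c$ lies in $\mathfrak{p}$.

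Now suppose $xyz \in \mathfrak{p}_1 \cap \mathfrak{p}_2$ for some $x,y,z \in S$. Applying the auxiliary fact to $\mathfrak{p}_1$ produces a factor $a \in \set{x,y,z}$ with $a \in \mathfrak{p}_1$, and applying it to $\mathfrak{p}_2$ produces a factor $b \in \set{x,y,z}$ with $b \in \mathfrak{p}_2$. I would then split into two cases. If $a \neq b$, then $ab$ is exactly one of the three pairwise products $xy$, $yz$, $xz$; since $a \in \mathfrak{p}_1$ and $b \in \mathfrak{p}_2$ while both $\mathfrak{p}_1,\mathfrak{p}_2$ are ideals, we get $ab \in \mathfrak{p}_1$ and $ab \in \mathfrak{p}_2$, so $ab \in \mathfrak{p}_1 \cap \mathfrak{p}_2$. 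If instead $a = b$, then this common factor already lies in $\mathfrak{p}_1 \cap \mathfrak{p}_2$, and multiplying it by either of the two remaining factors yields a pairwise product inside $\mathfrak{p}_1 \cap \mathfrak{p}_2$. In both cases one of $xy,yz,xz$ belongs to $\mathfrak{p}_1 \cap \mathfrak{p}_2$, which is precisely the 2-absorbing condition.

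The only point demanding care—and it is bookkeeping rather than a genuine obstacle—is ensuring that the two guaranteed factors can always be assembled into an honest \emph{two-element} product among $xy,yz,xz$; this is exactly why the coincidence $a=b$ must be treated on its own, lest one be left holding only a single factor. Once that case is isolated the argument closes with no further difficulty.
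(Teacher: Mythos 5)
Your proof is correct and follows essentially the same route as the paper's: use the element-wise characterization of primeness to locate a factor of $xyz$ in each of $\mathfrak{p}_1$ and $\mathfrak{p}_2$, then use the ideal (absorption) property to turn those two factors into a pairwise product lying in $\mathfrak{p}_1 \cap \mathfrak{p}_2$. Your explicit split into the cases $a = b$ and $a \neq b$ is just a cleaner bookkeeping of the same case analysis the paper carries out by enumeration.
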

	
	\begin{proposition}
		
		\label{2-absorbingradical}
		
		Let $\mathfrak{a}$ be a 2-absorbing ideal of a semiring $S$. Then, $\sqrt{\mathfrak{a}}$ is a 2-absorbing ideal of $S$ and $s^2 \in \mathfrak{a}$ for each $s\in \sqrt{\mathfrak{a}}$.
		
		\begin{proof}
			
			The proof is just a mimicking of the proof of Theorem 2.1 in \cite{Badawi2007} and therefore, omitted.
		\end{proof}
	\end{proposition}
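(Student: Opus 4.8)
The plan is to transcribe Badawi's ring-theoretic argument \cite{Badawi2007} into the semiring setting, checking at every step that no additive cancellation is invoked, so that the proof survives in an arbitrary (not necessarily subtractive) semiring. I will prove the numerical assertion first, since the 2-absorbing property of $\sqrt{\mathfrak{a}}$ depends on it. So let $s \in \sqrt{\mathfrak{a}}$ and let $n$ be the least positive integer with $s^n \in \mathfrak{a}$. If $n \le 2$ there is nothing to do; in particular, when $n = 1$ we have $s \in \mathfrak{a}$, hence $s^2 = s \cdot s \in \mathfrak{a}$ because $\mathfrak{a}$ is an ideal. If $n \ge 3$, I would write $s^n = s \cdot s \cdot s^{n-2}$ and apply the 2-absorbing hypothesis: one of $s^2 = s \cdot s$ or $s^{n-1} = s \cdot s^{n-2}$ lies in $\mathfrak{a}$. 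The second alternative contradicts the minimality of $n$, leaving $s^2 \in \mathfrak{a}$.

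Next I would record that $\sqrt{\mathfrak{a}}$ is a proper ideal. Properness is immediate, since $1 \in \sqrt{\mathfrak{a}}$ would give $1 = 1^n \in \mathfrak{a}$, contradicting $\mathfrak{a} \neq S$. Closure under multiplication by $S$ follows from $(sx)^n = s^n x^n$, and closure under addition follows from the expansion $(x+y)^{m+n-1} = \sum_{k} \binom{m+n-1}{k} x^{k} y^{\,m+n-1-k}$, which is valid in any commutative semiring because it uses only repeated addition and distributivity, never subtraction. Indeed, if $x^m, y^n \in \mathfrak{a}$, then in each summand either $k \ge m$, so $x^{k} \in \mathfrak{a}$, or $k \le m-1$, so $m+n-1-k \ge n$ and $y^{\,m+n-1-k} \in \mathfrak{a}$; hence every term, and therefore the whole sum, lies in $\mathfrak{a}$.

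For the 2-absorbing property, suppose $abc \in \sqrt{\mathfrak{a}}$; I must show one of $ab, bc, ac$ lies in $\sqrt{\mathfrak{a}}$, so I may assume $ab \notin \sqrt{\mathfrak{a}}$ and aim for $ac \in \sqrt{\mathfrak{a}}$ or $bc \in \sqrt{\mathfrak{a}}$. By the first part, $(abc)^2 = a^2 b^2 c^2 \in \mathfrak{a}$. Reading this product as $(ab)(ab)(c^2)$ and applying the hypothesis gives $(ab)^2 \in \mathfrak{a}$ or $abc^2 \in \mathfrak{a}$; the former would force $ab \in \sqrt{\mathfrak{a}}$, which is excluded, so $abc^2 \in \mathfrak{a}$. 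Reading $abc^2$ as $(a)(b)(c^2)$ and applying the hypothesis again yields $ab \in \mathfrak{a}$, $ac^2 \in \mathfrak{a}$, or $bc^2 \in \mathfrak{a}$; once more $ab \in \mathfrak{a} \subseteq \sqrt{\mathfrak{a}}$ is excluded. If $ac^2 \in \mathfrak{a}$, then $a \cdot (ac^2) = (ac)^2 \in \mathfrak{a}$, so $ac \in \sqrt{\mathfrak{a}}$; symmetrically, $bc^2 \in \mathfrak{a}$ gives $(bc)^2 \in \mathfrak{a}$ and $bc \in \sqrt{\mathfrak{a}}$. This finishes the argument.

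I expect the only genuine point requiring care to be the bookkeeping that guarantees no step secretly uses additive inverses — in particular, confirming via the subtraction-free binomial expansion that $\sqrt{\mathfrak{a}}$ is an ideal — rather than any conceptual difficulty, since the remaining manipulations are a direct translation of the classical ring computation.
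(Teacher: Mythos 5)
Your proposal is correct and is precisely what the paper intends by ``mimicking the proof of Theorem 2.1 in Badawi'': the minimal-exponent argument for $s^2\in\mathfrak{a}$, the subtraction-free binomial expansion showing $\sqrt{\mathfrak{a}}$ is an ideal, and the squaring trick all carry over verbatim. The only cosmetic difference is in the last step, where Badawi reads $a^2b^2c^2$ as $(a^2)(b^2)(c^2)$ and concludes $(ab)^2\in\mathfrak{a}$, $(bc)^2\in\mathfrak{a}$, or $(ac)^2\in\mathfrak{a}$ in a single application of the 2-absorbing hypothesis, whereas you use two applications; both are valid.
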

	
	\begin{proposition}
		
		\label{2-absorbingmulti}
		Let $(S, \mathfrak{m})$ be a local semiring. For each prime ideal $\mathfrak{p}$ of $S$, $\mathfrak{pm}$ is a 2-absorbing ideal of $S$. Furthermore, $\mathfrak{pm}$ is a prime ideal of $S$ if and only if $\mathfrak{pm} = \mathfrak{p}$.
		
		\begin{proof}
			Let $s_1 s_2 s_3 \in \mathfrak{pm}$. Since $\mathfrak{pm} \subseteq \mathfrak{p}$ and $\mathfrak{p}$ is a prime ideal of $S$, we have either $s_1 \in \mathfrak{p}$, or $s_2 \in \mathfrak{p}$, or $s_3 \in \mathfrak{p}$. Now let for the moment $s_1 \in \mathfrak{p}$. If $s_2$ is a unit, then it is obvious that $s^{-1}_2 (s_1 s_2 s_3) = s_1 s_3$ is an element of $\mathfrak{pm}$. If not, then since $S$ is local, $s_2$ is an element of $\mathfrak{m}$. So, $s_1 s_2 \in \mathfrak{pm}$.
			
			Now let $\mathfrak{pm}$ be a prime ideal of $S$. Clearly, $\mathfrak{pm} \subseteq \mathfrak{p}$. Assume $s\in \mathfrak{p}$. It is obvious that $s\in \mathfrak{m}$ and so, $s^2 \in \mathfrak{pm}$. But $\mathfrak{pm}$ is prime, so $s\in \mathfrak{pm}$ and the proof is complete.
		\end{proof} 
	\end{proposition}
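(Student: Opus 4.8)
The plan is to treat the two assertions separately, using the structural fact recalled in the introduction that in a local semiring the complement $S - U(S)$ is precisely the unique maximal ideal $\mathfrak{m}$, so that every non-unit of $S$ lies in $\mathfrak{m}$. I will also repeatedly use the containment $\mathfrak{pm} \subseteq \mathfrak{p}$, which is immediate because $\mathfrak{m} \subseteq S$ forces $\mathfrak{pm} \subseteq \mathfrak{p}S = \mathfrak{p}$.

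For the 2-absorbing property, I would start from $s_1 s_2 s_3 \in \mathfrak{pm}$. Since $\mathfrak{pm} \subseteq \mathfrak{p}$ and $\mathfrak{p}$ is prime, one of the three factors lies in $\mathfrak{p}$, and by symmetry I may assume it is $s_1$. The idea is then to split on the status of $s_2$: if $s_2$ is a unit, I multiply the membership relation by $s_2^{-1} \in S$ to obtain $s_1 s_3 \in \mathfrak{pm}$; if $s_2$ is a non-unit, then locality gives $s_2 \in \mathfrak{m}$, so that $s_1 \in \mathfrak{p}$ and $s_2 \in \mathfrak{m}$ yield $s_1 s_2 \in \mathfrak{pm}$. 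In either case a product of two of the three factors lands in $\mathfrak{pm}$, which is exactly the 2-absorbing condition.

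For the equivalence, the implication $\mathfrak{pm} = \mathfrak{p} \Rightarrow \mathfrak{pm}$ prime is immediate, since $\mathfrak{p}$ is prime by hypothesis. For the converse I would assume $\mathfrak{pm}$ is prime and, since $\mathfrak{pm} \subseteq \mathfrak{p}$ always holds, establish the reverse inclusion. Given $s \in \mathfrak{p}$, the fact that the proper ideal $\mathfrak{p}$ is contained in the unique maximal ideal yields $s \in \mathfrak{m}$ as well, so $s \cdot s \in \mathfrak{pm}$; primeness of $\mathfrak{pm}$ then forces $s \in \mathfrak{pm}$, giving $\mathfrak{p} \subseteq \mathfrak{pm}$ and hence equality.

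I do not expect a genuine obstacle here; the only step needing a moment's care is the division by the unit $s_2$, namely verifying that $s_2^{-1}(s_1 s_2 s_3)$ simplifies to $s_1 s_3$ and that membership in the ideal $\mathfrak{pm}$ survives multiplication by $s_2^{-1}$, both of which follow directly from the semiring and ideal axioms.
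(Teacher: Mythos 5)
Your proposal is correct and follows essentially the same route as the paper's own proof: the same prime-factor-plus-unit/non-unit case split for the 2-absorbing property, and the same $s^2 \in \mathfrak{pm}$ trick (using $\mathfrak{p} \subseteq \mathfrak{m}$) for the equivalence. The only difference is that you spell out the justifications (locality giving $S - U(S) = \mathfrak{m}$, and $\mathfrak{p} \subseteq \mathfrak{m}$ since $\mathfrak{p}$ is proper) that the paper leaves implicit.
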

	
	Let us recall that a prime ideal $\mathfrak{p}$ of a semiring $S$ is said to be a minimal prime ideal over an ideal $\mathfrak{a}$ of $S$ if it is minimal among all prime ideals containing $\mathfrak{a}$. We collect all minimal prime ideals over an ideal $\mathfrak{a}$ in $\Min(\mathfrak{a})$. 
	
	A subset $W$ of a semiring is called a multiplicatively closed set (for short MC-set) if $1_S \in W$ and if $w_1$ and $w_2$ are elements of $S$, then $w_1 w_2 \in W$. We also recall the following:
	
	\begin{theorem}[Theorem 3.5 in \cite{Nasehpour2018}]
		
		\label{minimalprimehuckaba}
		
		Let $\mathfrak{p} \supseteq \mathfrak{a}$ be ideals of a semiring $S$, where $\mathfrak{p}$ is prime. Then, the following statements are equivalent:
		
		\begin{enumerate}
			
			\item $\mathfrak{p}$ is a minimal prime ideal of $\mathfrak{a}$.
			
			\item $S-\mathfrak{p}$ is an MC-set maximal with respect to being disjoint to $\mathfrak{a}$.
			
			\item For each $x\in \mathfrak{p}$, there is some $y\in S-\mathfrak{p}$ and a nonnegative integer $i$ such that $yx^i \in \mathfrak{a}$.
			
		\end{enumerate}
		
	\end{theorem}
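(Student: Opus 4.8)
The plan is to prove the three conditions equivalent by establishing the cycle $(1)\Rightarrow(2)\Rightarrow(3)\Rightarrow(1)$. Two of these links are elementary and use only that $\mathfrak{p}$ is prime, so that $S-\mathfrak{p}$ is an MC-set, together with the hypothesis $\mathfrak{a}\subseteq\mathfrak{p}$, so that $S-\mathfrak{p}$ is disjoint from $\mathfrak{a}$. The remaining link $(1)\Rightarrow(2)$ is where the real content lies, and it rests on the semiring analogue of Kaplansky's lemma that an ideal maximal with respect to avoiding a fixed MC-set is prime.

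For $(2)\Rightarrow(3)$, I would fix $x\in\mathfrak{p}$ and consider the MC-set $W$ generated by $(S-\mathfrak{p})\cup\{x\}$; since $S-\mathfrak{p}$ is already multiplicatively closed, every element of $W$ has the shape $yx^{i}$ with $y\in S-\mathfrak{p}$ and $i\geq 0$. Because $x\in\mathfrak{p}$ we have $W\supset S-\mathfrak{p}$, so maximality in $(2)$ forces $W\cap\mathfrak{a}\neq\varnothing$; an element $yx^{i}\in\mathfrak{a}$ is exactly what $(3)$ demands, and necessarily $i\geq 1$, since $i=0$ would give $y\in\mathfrak{a}\subseteq\mathfrak{p}$, contradicting $y\notin\mathfrak{p}$. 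For $(3)\Rightarrow(1)$, suppose $\mathfrak{q}$ is a prime with $\mathfrak{a}\subseteq\mathfrak{q}\subseteq\mathfrak{p}$ and take any $x\in\mathfrak{p}$; condition $(3)$ yields $y\in S-\mathfrak{p}$ and $i\geq 1$ with $yx^{i}\in\mathfrak{a}\subseteq\mathfrak{q}$, and since $y\notin\mathfrak{p}\supseteq\mathfrak{q}$ forces $y\notin\mathfrak{q}$, primeness of $\mathfrak{q}$ gives $x^{i}\in\mathfrak{q}$ and hence $x\in\mathfrak{q}$. Thus $\mathfrak{p}\subseteq\mathfrak{q}$, so $\mathfrak{q}=\mathfrak{p}$ and $\mathfrak{p}$ is minimal over $\mathfrak{a}$.

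The crux is $(1)\Rightarrow(2)$, which I would argue by contradiction: if $S-\mathfrak{p}$ were not maximal among MC-sets disjoint from $\mathfrak{a}$, there would be an MC-set $W\supset S-\mathfrak{p}$ with $W\cap\mathfrak{a}=\varnothing$. The family of ideals containing $\mathfrak{a}$ and disjoint from $W$ is nonempty (it contains $\mathfrak{a}$) and closed under unions of chains, so Zorn's lemma provides a maximal member $\mathfrak{q}$; the key lemma is that this $\mathfrak{q}$ is prime. Granting that, $\mathfrak{q}\supseteq\mathfrak{a}$ and $\mathfrak{q}\cap W=\varnothing$ give $\mathfrak{q}\subseteq\mathfrak{p}$ (because $W\supseteq S-\mathfrak{p}$), while any $z\in W\cap\mathfrak{p}$ lies in $\mathfrak{p}$ but not in $\mathfrak{q}$, so $\mathfrak{q}\subset\mathfrak{p}$, contradicting minimality.

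To prove the lemma, that an ideal $\mathfrak{q}$ maximal with respect to containing $\mathfrak{a}$ and avoiding the MC-set $W$ is prime, I would suppose $ab\in\mathfrak{q}$ with $a,b\notin\mathfrak{q}$. Then $\mathfrak{q}+(a)$ and $\mathfrak{q}+(b)$ properly contain $\mathfrak{q}$, so by maximality each meets $W$, say $t_{1}=q_{1}+s_{1}a$ and $t_{2}=q_{2}+s_{2}b$ lie in $W$ with $q_{1},q_{2}\in\mathfrak{q}$ and $s_1,s_2\in S$. Expanding $t_{1}t_{2}$ by distributivity, every term except possibly $s_{1}s_{2}ab$ visibly lies in $\mathfrak{q}$, and $s_{1}s_{2}ab\in\mathfrak{q}$ because $ab\in\mathfrak{q}$; hence $t_{1}t_{2}\in\mathfrak{q}\cap W$, contradicting disjointness. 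I expect this to be the main obstacle, and the point worth checking is precisely that this Kaplansky-type argument survives the passage from rings to semirings: it does, since it uses only distributivity, closure of $\mathfrak{q}$ under addition and under multiplication by $S$, and multiplicative closure of $W$, so no subtraction and hence no subtractivity of $S$ is invoked. I would also note that the principal ideal has the clean form $(a)=\set{sa\given s\in S}$ in a semiring with identity, which is what makes the expansion of $t_1t_2$ land in $\mathfrak{q}$.
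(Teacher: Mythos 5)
Your proof is correct, but there is no internal proof to measure it against: the paper does not prove this theorem at all, it simply imports it as Theorem 3.5 of \cite{Nasehpour2018}, so any comparison is with that external source rather than with an argument in the text. What you have written is a sound, self-contained proof along the classical Kaplansky/Huckaba lines, and the cycle $(1)\Rightarrow(2)\Rightarrow(3)\Rightarrow(1)$ is well chosen: the description of the MC-set generated by $(S-\mathfrak{p})\cup\{x\}$ as $\set{yx^i \given y\in S-\mathfrak{p},\ i\geq 0}$ is valid because $S-\mathfrak{p}$ is multiplicatively closed and contains $1$; the step $(3)\Rightarrow(1)$ correctly uses the elementwise characterization of primeness (which the paper records via \cite[Corollary 7.6]{Golan1999(b)}); and your Kaplansky-type lemma — an ideal maximal among those containing $\mathfrak{a}$ and disjoint from an MC-set $W$ is prime — goes through in semirings exactly as you say, since $(a)=\set{sa\given s\in S}$ in a commutative semiring with identity and the expansion of $t_1t_2$ uses only distributivity and absorption, never subtraction. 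That last observation is the right thing to check, as the theorem carries no subtractivity hypothesis, in contrast with most results in this paper. Two cosmetic completions: the maximal ideal $\mathfrak{q}$ produced by Zorn's lemma is proper because $1\in W$ and $\mathfrak{q}\cap W=\varnothing$ (properness is part of primeness), and one should note that a union of a chain of ideals disjoint from $W$ and containing $\mathfrak{a}$ is again such an ideal, which justifies the Zorn application; both are one-line remarks and do not affect the validity of the argument.
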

	
	\begin{theorem}
		
		\label{minimalprimeover2-ab}
		Let $\mathfrak{a}$ be a 2-absorbing ideal of a semiring $S$ such that each minimal prime ideal over $\mathfrak{a}$ is subtractive. Then, $|\Min(\mathfrak{a})| \leq 2$.
		
		\begin{proof}
			
			By considering Theorem \ref{minimalprimehuckaba} and this point that each minimal prime ideal over the ideal $\mathfrak{a}$ is subtractive, the proof is nothing but a mimicking of the proof of Theorem 2.3 in \cite{Badawi2007}.
		\end{proof}
		
	\end{theorem}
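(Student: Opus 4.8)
The plan is to argue by contradiction, following the strategy of \cite[Theorem 2.3]{Badawi2007} but with the role played in the ring-theoretic argument by localization supplied instead by the minimal-prime characterization of Theorem \ref{minimalprimehuckaba}, and with every use of subtraction justified by the subtractivity hypothesis on the minimal primes. So suppose, contrary to the claim, that $\mathfrak{a}$ admits three pairwise distinct minimal primes $\mathfrak{p}_1,\mathfrak{p}_2,\mathfrak{p}_3\in\Min(\mathfrak{a})$.

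First I would separate the three primes from one another. Being distinct and minimal over $\mathfrak{a}$, none of them is contained in another; since each $\mathfrak{p}_i$ is subtractive, the classical prime-avoidance argument (whose inductive step is exactly where subtraction is used) yields $\mathfrak{p}_i\not\subseteq\mathfrak{p}_j\cup\mathfrak{p}_k$ whenever $\{i,j,k\}=\{1,2,3\}$. Hence I may choose $u_i\in\mathfrak{p}_i\setminus(\mathfrak{p}_j\cup\mathfrak{p}_k)$ for each $i$ and set $x:=u_1u_2u_3$, which lies in $\mathfrak{p}_1\cap\mathfrak{p}_2\cap\mathfrak{p}_3$.

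The key step, and the one I expect to be the main obstacle, is to push a power of $x$ into $\mathfrak{a}$ by a multiplier lying \emph{outside all three} primes simultaneously (the single application of Theorem \ref{minimalprimehuckaba} to one $\mathfrak{p}_i$ only produces a multiplier avoiding that one prime). To achieve this I would consider $W:=S\setminus(\mathfrak{p}_1\cup\mathfrak{p}_2\cup\mathfrak{p}_3)$, which is an MC-set because each $\mathfrak{p}_i$ is prime and which is disjoint from $\mathfrak{a}$ since $\mathfrak{a}\subseteq\mathfrak{p}_1$, and then the MC-set $T:=\set{w x^n\given w\in W,\ n\geq 0}$ generated by $W$ together with $x$. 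If $T\cap\mathfrak{a}\neq\emptyset$ we are done, so assume $T\cap\mathfrak{a}=\emptyset$; a standard Zorn argument then produces a prime $\mathfrak{q}\supseteq\mathfrak{a}$ with $\mathfrak{q}\cap T=\emptyset$. As $W\subseteq T$ we get $\mathfrak{q}\cap W=\emptyset$, i.e. $\mathfrak{q}\subseteq\mathfrak{p}_1\cup\mathfrak{p}_2\cup\mathfrak{p}_3$, so prime avoidance forces $\mathfrak{q}\subseteq\mathfrak{p}_i$ for some $i$; since $\mathfrak{p}_i$ is minimal over $\mathfrak{a}$ and $\mathfrak{a}\subseteq\mathfrak{q}\subseteq\mathfrak{p}_i$, we get $\mathfrak{q}=\mathfrak{p}_i$, whence $x\in\mathfrak{p}_i=\mathfrak{q}$, contradicting $x\in T$ and $\mathfrak{q}\cap T=\emptyset$. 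This is precisely the place where Theorem \ref{minimalprimehuckaba}, in its MC-set form, does the work. Therefore $T\cap\mathfrak{a}\neq\emptyset$, and because $\mathfrak{a}\cap W=\emptyset$ the witness has the form $w x^N$ with $w\in W$ and $N\geq 1$; that is, $w u_1^N u_2^N u_3^N\in\mathfrak{a}$.

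Finally I would apply the 2-absorbing property to the three factors $w u_1^N$, $u_2^N$, $u_3^N$, whose product lies in $\mathfrak{a}$. This forces one of $w u_1^N u_2^N$, $u_2^N u_3^N$, $w u_1^N u_3^N$ into $\mathfrak{a}$. But $\mathfrak{a}\subseteq\mathfrak{p}_3$ together with $w,u_1,u_2\notin\mathfrak{p}_3$ rules out the first; $\mathfrak{a}\subseteq\mathfrak{p}_1$ with $u_2,u_3\notin\mathfrak{p}_1$ rules out the second; and $\mathfrak{a}\subseteq\mathfrak{p}_2$ with $w,u_1,u_3\notin\mathfrak{p}_2$ rules out the third. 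All three alternatives are impossible, so no three distinct minimal primes can exist and $|\Min(\mathfrak{a})|\leq 2$. The only delicate points are the two invocations of prime avoidance, which is exactly why the hypothesis requires the minimal primes over $\mathfrak{a}$ to be subtractive.
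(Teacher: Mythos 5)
Your proof is correct, but it follows a genuinely different route from the paper's, which by its own account simply transports Badawi's proof of Theorem 2.3 in \cite{Badawi2007} to semirings. That argument applies the minimal-prime characterization (here Theorem \ref{minimalprimehuckaba}) to two of the primes separately: with $x\in\mathfrak{p}_1\setminus(\mathfrak{p}_2\cup\mathfrak{p}_3)$ and $y\in\mathfrak{p}_2\setminus(\mathfrak{p}_1\cup\mathfrak{p}_3)$ one finds $c\notin\mathfrak{p}_1$ and $d\notin\mathfrak{p}_2$ with $cx^m,\,dy^k\in\mathfrak{a}$; repeated use of the 2-absorbing property (together with $x,y\notin\sqrt{\mathfrak{a}}$) reduces these to $cx,\,dy\in\mathfrak{a}$, which forces $c\in(\mathfrak{p}_2\cap\mathfrak{p}_3)\setminus\mathfrak{p}_1$ and $d\in(\mathfrak{p}_1\cap\mathfrak{p}_3)\setminus\mathfrak{p}_2$; one then applies 2-absorbency to $(c+d)xy=(cx)y+(dy)x\in\mathfrak{a}$, where subtractivity is what places $c+d$ outside $\mathfrak{p}_1\cup\mathfrak{p}_2$, and all three alternatives are impossible. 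You replace this additive trick by a saturation argument that produces a single multiplier $w$ outside all three primes at once, and then need the 2-absorbing hypothesis only once. Two remarks on your version. First, your key step does not in fact use Theorem \ref{minimalprimehuckaba} in any of its three forms, despite your attribution: what it uses is Krull's theorem for semirings (an ideal disjoint from an MC-set is contained in a prime disjoint from it), which is standard, provable by the usual Zorn argument without any subtractivity, so the mathematics stands but the citation should be corrected. Second, both of your prime-avoidance appeals --- the two-prime version for choosing the $u_i$ and the three-prime version applied to $\mathfrak{q}$ --- do require the covering primes to be subtractive, exactly as you say; in a write-up the three-prime case (consider $a_1+a_2a_3$) should be spelled out, since it is not in the semiring literature cited here. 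The trade-off: your route confines subtractivity entirely to prime avoidance and is arguably cleaner, at the cost of Zorn's lemma and these auxiliary avoidance lemmas; the paper's route needs subtractivity additionally in the step $c+d\notin\mathfrak{p}_1\cup\mathfrak{p}_2$, but it invokes nothing beyond the already-cited Theorem \ref{minimalprimehuckaba}.
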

	
	Let us recall that, by definition, a semiring $S$ is weak Gaussian if $c(f)c(g) \subseteq \sqrt {c(fg)} $ for all $f,g \in S[X]$, where the content of a polynomial $f\in S[X]$, denoted by $c(f)$, is an ideal of $S$ generated by the coefficients of $f$. Note that a semiring $S$ is weak Gaussian if and only if each prime ideal of $S$ is subtractive \cite[Definition 18, Theorem 19]{Nasehpour2016}.
	
	\begin{corollary}
		Let $S$ be a weak Gaussian semiring and $\mathfrak{a}$ a 2-absorbing ideal of $S$. Then, $|\Min(\mathfrak{a})| \leq 2$.
	\end{corollary}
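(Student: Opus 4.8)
The plan is to read off this statement as a direct corollary of Theorem \ref{minimalprimeover2-ab} together with the characterization of weak Gaussian semirings recorded just before it. The only hypothesis in Theorem \ref{minimalprimeover2-ab} beyond $\mathfrak{a}$ being 2-absorbing is that \emph{each minimal prime ideal over $\mathfrak{a}$ is subtractive}, so the entire task reduces to verifying that this subtractivity condition is automatic under the weak Gaussian assumption.

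First I would invoke the cited equivalence (Definition 18 and Theorem 19 in \cite{Nasehpour2016}): a semiring $S$ is weak Gaussian \emph{if and only if} each prime ideal of $S$ is subtractive. Since $S$ is assumed weak Gaussian, every prime ideal of $S$ is therefore subtractive. In particular, by definition an element $\mathfrak{p}$ of $\Min(\mathfrak{a})$ is a prime ideal (it is a minimal prime ideal \emph{over} $\mathfrak{a}$, hence prime), so each such $\mathfrak{p}$ is subtractive. Thus the hypothesis ``each minimal prime ideal over $\mathfrak{a}$ is subtractive'' of Theorem \ref{minimalprimeover2-ab} is satisfied.

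With both hypotheses of Theorem \ref{minimalprimeover2-ab} now in place --- $\mathfrak{a}$ is 2-absorbing and every minimal prime over $\mathfrak{a}$ is subtractive --- I would simply apply that theorem to conclude $|\Min(\mathfrak{a})| \leq 2$. There is no genuine obstacle here: the content of the bound lives entirely in Theorem \ref{minimalprimeover2-ab} (itself a transfer of Badawi's Theorem 2.3 in \cite{Badawi2007} to the subtractive semiring setting), and the role of the weak Gaussian hypothesis is purely to supply subtractivity of the relevant primes via \cite{Nasehpour2016}. The step that does the real work is thus the translation of ``weak Gaussian'' into ``prime ideals are subtractive,'' after which the result is immediate.
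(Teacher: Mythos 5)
Your proof is correct and is exactly the argument the paper intends (the corollary is stated without a written proof precisely because it follows this way): the weak Gaussian hypothesis yields, via the cited equivalence in \cite{Nasehpour2016}, that all prime ideals --- in particular all minimal primes over $\mathfrak{a}$ --- are subtractive, so Theorem \ref{minimalprimeover2-ab} applies directly.
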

	
	\begin{theorem}
		
		\label{p2subsetofideal}
		
		Let $\mathfrak{a}$ be a 2-absorbing ideal of a subtractive semiring $S$. Then, one of the following statements must hold:
		
		\begin{enumerate}
			
			\item $\sqrt{\mathfrak{a}}=\mathfrak{p}$ is a prime ideal of $S$ such that $\mathfrak{p}^2 \subseteq \mathfrak{a}$.
			
			\item $\sqrt{\mathfrak{a}} = \mathfrak{p}_1 \cap \mathfrak{p}_2 $, $\mathfrak{p}_1 \mathfrak{p}_2 \subseteq \mathfrak{a}$ and $(\sqrt{\mathfrak{a}})^2 \subseteq \mathfrak{a}$ where $\mathfrak{p}_1$, $\mathfrak{p}_2$ are the only distinct prime ideals of $S$ that are minimal over $\mathfrak{a}$.
		\end{enumerate}
		
	\end{theorem}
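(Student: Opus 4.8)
The plan is to adapt Badawi's structure theorem for commutative rings (Theorem~2.4 of \cite{Badawi2007}) to the subtractive semiring setting, replacing each subtraction in the classical proof by an appeal to subtractivity. First I would record the reductions. Since $S$ is subtractive, every ideal of $S$ is subtractive, and in particular so is every prime minimal over $\mathfrak{a}$; hence Theorem~\ref{minimalprimeover2-ab} applies and gives $|\Min(\mathfrak{a})| \leq 2$. As $\mathfrak{a}$ is proper it has at least one minimal prime, and since $\sqrt{\mathfrak{a}}$ is the intersection of the primes minimal over $\mathfrak{a}$, we land in exactly one of two cases: either $\Min(\mathfrak{a}) = \{\mathfrak{p}\}$, so $\sqrt{\mathfrak{a}} = \mathfrak{p}$ is prime, or $\Min(\mathfrak{a}) = \{\mathfrak{p}_1,\mathfrak{p}_2\}$ with $\mathfrak{p}_1 \neq \mathfrak{p}_2$ (necessarily incomparable) and $\sqrt{\mathfrak{a}} = \mathfrak{p}_1 \cap \mathfrak{p}_2$.

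Next I would prove the uniform inclusion $(\sqrt{\mathfrak{a}})^2 \subseteq \mathfrak{a}$, which settles case~(1) outright (there $(\sqrt{\mathfrak{a}})^2 = \mathfrak{p}^2$) and supplies the final assertion of case~(2). For $x,y \in \sqrt{\mathfrak{a}}$, Proposition~\ref{2-absorbingradical} gives $x^2, y^2 \in \mathfrak{a}$, so the distributive identity $xy(x+y) = x^2 y + x y^2$ shows $xy(x+y) \in \mathfrak{a}$. Applying Definition~\ref{2-absorbingdef} to the triple $x,y,x+y$ gives $xy \in \mathfrak{a}$, or $x(x+y) = x^2 + xy \in \mathfrak{a}$, or $y(x+y) = xy + y^2 \in \mathfrak{a}$; in the last two cases subtractivity together with $x^2,y^2 \in \mathfrak{a}$ removes the square and again forces $xy \in \mathfrak{a}$. (Note the naive route via $(x+y)^2$ would only yield $2xy \in \mathfrak{a}$, which is insufficient; the cubic identity is what avoids this.) Since $(\sqrt{\mathfrak{a}})^2$ is generated by such products, $(\sqrt{\mathfrak{a}})^2 \subseteq \mathfrak{a}$.

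The remaining and genuinely harder point is the inclusion $\mathfrak{p}_1 \mathfrak{p}_2 \subseteq \mathfrak{a}$ in case~(2), which is strictly stronger than $(\mathfrak{p}_1 \cap \mathfrak{p}_2)^2 \subseteq \mathfrak{a}$. My plan is to route this through the ideal-theoretic form of $2$-absorption: \emph{if $\mathfrak{b}\mathfrak{c}\mathfrak{d} \subseteq \mathfrak{a}$ for ideals $\mathfrak{b},\mathfrak{c},\mathfrak{d}$, then one of $\mathfrak{b}\mathfrak{c}$, $\mathfrak{b}\mathfrak{d}$, $\mathfrak{c}\mathfrak{d}$ lies in $\mathfrak{a}$.} This upgrade follows from Definition~\ref{2-absorbingdef} by three successive applications (peeling off one ideal factor at a time, first for an element against two ideals, then for three ideals) of the elementary fact that in a subtractive semiring an ideal $C$ contained in a union $A \cup B$ of two ideals is contained in $A$ or in $B$: indeed, if $c_1 \in C\setminus B$ (so $c_1 \in A$) and $c_2 \in C\setminus A$ (so $c_2 \in B$), then $c_1 + c_2 \in C$ cannot lie in $A$ or in $B$ without contradicting subtractivity of $A$ or of $B$. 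Granting the ideal form, I finish as follows. Since $\mathfrak{p}_1 \mathfrak{p}_2 \subseteq \mathfrak{p}_1 \cap \mathfrak{p}_2 = \sqrt{\mathfrak{a}}$, we get $\mathfrak{p}_1 \mathfrak{p}_2\, \sqrt{\mathfrak{a}} \subseteq (\sqrt{\mathfrak{a}})^2 \subseteq \mathfrak{a}$; applying the ideal form to $(\mathfrak{p}_1,\mathfrak{p}_2,\sqrt{\mathfrak{a}})$ gives either $\mathfrak{p}_1\mathfrak{p}_2 \subseteq \mathfrak{a}$ (done) or, by symmetry, say $\mathfrak{p}_1 \sqrt{\mathfrak{a}} \subseteq \mathfrak{a}$. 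In the latter case $\mathfrak{p}_1^2 \mathfrak{p}_2 \subseteq \mathfrak{p}_1(\mathfrak{p}_1 \cap \mathfrak{p}_2) = \mathfrak{p}_1 \sqrt{\mathfrak{a}} \subseteq \mathfrak{a}$, and the ideal form applied to $(\mathfrak{p}_1,\mathfrak{p}_1,\mathfrak{p}_2)$ yields $\mathfrak{p}_1\mathfrak{p}_2 \subseteq \mathfrak{a}$ or $\mathfrak{p}_1^2 \subseteq \mathfrak{a}$; but $\mathfrak{p}_1^2 \subseteq \mathfrak{a}$ forces $\mathfrak{p}_1 \subseteq \sqrt{\mathfrak{a}} \subseteq \mathfrak{p}_2$, contradicting the incomparability of the two distinct minimal primes. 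Hence $\mathfrak{p}_1 \mathfrak{p}_2 \subseteq \mathfrak{a}$.

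I expect the main obstacle to be exactly this last inclusion. Unlike $(\sqrt{\mathfrak{a}})^2 \subseteq \mathfrak{a}$, it cannot be reached by a single elementwise manipulation, and a direct covering argument stalls because $\mathfrak{p}_1$ and $\mathfrak{p}_2$ properly contain $\sqrt{\mathfrak{a}}$, so one cannot feed their elements into the $(\sqrt{\mathfrak{a}})^2$-bound. The ideal form of $2$-absorption is what makes the reduction go through, and the only semiring-specific care needed anywhere (both there and in the $(\sqrt{\mathfrak{a}})^2$ step) is to convert each cancellation of the classical proof into an instance of subtractivity or of the union lemma above — which is legitimate precisely because $S$ is assumed subtractive.
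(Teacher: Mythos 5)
Your proof is correct, but at the crucial step it takes a genuinely different route from the paper. The paper's own proof is a one-line deferral: it invokes Theorem~\ref{minimalprimeover2-ab} and then ``mimics'' Badawi's proof of Theorem~2.4 in \cite{Badawi2007}, with subtractivity replacing subtraction. That argument coincides with yours through the reduction to $|\Min(\mathfrak{a})|\leq 2$ and the inclusion $(\sqrt{\mathfrak{a}})^2\subseteq\mathfrak{a}$ (your identity $xy(x+y)=x^2y+xy^2$ plus subtractivity is exactly Badawi's step), but it gets $\mathfrak{p}_1\mathfrak{p}_2\subseteq\mathfrak{a}$ element-wise: for $x\in\mathfrak{p}_1\setminus\mathfrak{p}_2$ and $y\in\mathfrak{p}_2\setminus\mathfrak{p}_1$ one has $xy\in\sqrt{\mathfrak{a}}$, hence $x^2y^2\in\mathfrak{a}$ by Proposition~\ref{2-absorbingradical}, and two applications of 2-absorption to $x\cdot x\cdot y^2$ and $x\cdot y\cdot y$ force $xy\in\mathfrak{a}$, since $x^2\in\mathfrak{a}$ or $y^2\in\mathfrak{a}$ would put $x\in\mathfrak{p}_2$ or $y\in\mathfrak{p}_1$; the mixed case $x\in\mathfrak{p}_1\setminus\mathfrak{p}_2$, $y\in\sqrt{\mathfrak{a}}$ is then settled by adding a witness $b\in\mathfrak{p}_2\setminus\mathfrak{p}_1$ to $y$ and using subtractivity. (So your parenthetical claim that a direct element-wise argument ``stalls'' is inaccurate --- it goes through --- though this does not affect the validity of your proof.) What you do instead is to first upgrade 2-absorption from elements to ideals, a semiring analogue of Badawi's Theorem~2.13, proved by peeling through the colon ideals $(\mathfrak{a}:x)$ and $(\mathfrak{a}:\mathfrak{d})$ together with the avoidance lemma that, in a subtractive semiring, an ideal contained in the union of two ideals lies in one of them; you then apply the ideal form to the triples $(\mathfrak{p}_1,\mathfrak{p}_2,\sqrt{\mathfrak{a}})$ and $(\mathfrak{p}_1,\mathfrak{p}_1,\mathfrak{p}_2)$, ruling out $\mathfrak{p}_1^2\subseteq\mathfrak{a}$ by incomparability of distinct minimal primes. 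All steps check out: the colon ideals are ideals, hence subtractive, so the union lemma applies, and the final contradiction $\mathfrak{p}_1\subseteq\sqrt{\mathfrak{a}}\subseteq\mathfrak{p}_2$ is sound. Your route costs a little more machinery but yields a standalone tool of independent interest (the ideal-theoretic 2-absorbing property over subtractive semirings) and avoids the case analysis over where elements of $\mathfrak{p}_1$ and $\mathfrak{p}_2$ sit; the paper's route is more elementary and stays at the element level. Both exploit subtractivity in the same essential way: it is what legitimizes removing a summand, and hence your union lemma.
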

	\begin{proof}
		By considering Theorem \ref{minimalprimeover2-ab} and this point that $S$ is a subtractive semiring, the proof is nothing but a mimicking of the proof of Theorem 2.4 in \cite{Badawi2007}.
	\end{proof}

	\begin{theorem}
		\label{m2twoabsorbing}
		Let $\mathfrak{a}$ be a $\mathfrak{p}$-primary ideal of a subtractive semiring $S$. Then, $\mathfrak{a}$ is a 2-absorbing ideal of $S$ if and only if $\mathfrak{p}^2 \subseteq \mathfrak{a}$. In particular, for each maximal ideal $\mathfrak{m}$ of $S$, $\mathfrak{m}^2$ is a 2-absorbing ideal of $S$.
		
		\begin{proof}
			Let $\mathfrak{a}$ be a 2-absorbing ideal of a subtractive semiring $S$. Then, by Theorem \ref{p2subsetofideal}, we have $\mathfrak{p}^2 \subseteq \mathfrak{a}$. Conversely, let $xyz \in \mathfrak{a}$ and $\mathfrak{p}^2 \subseteq \mathfrak{a}$. If $x \in \mathfrak{a}$ or $yz \in \mathfrak{a}$, then there is nothing to prove. If neither $x \in \mathfrak{a}$ nor $yz \in \mathfrak{a}$, as $\mathfrak{a}$ is a $\mathfrak{p}$-primary ideal of $S$, then $x \in \mathfrak{p}$ and $yz \in \mathfrak{p}$. Therefore, either $x,y \in \mathfrak{p}$ or $x,z \in \mathfrak{p}$. Now from the assumption $\mathfrak{p}^2 \subseteq \mathfrak{a}$, we get that either $xy \in \mathfrak{a}$ or $xz \in \mathfrak{a}$ and this completes the proof.
		\end{proof}
	\end{theorem}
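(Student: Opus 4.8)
The plan is to prove the two implications separately, extracting the forward direction from the already-established Theorem~\ref{p2subsetofideal} and treating the converse by a short direct argument; the ``in particular'' clause will then be an immediate application. For the forward implication, suppose $\mathfrak{a}$ is $2$-absorbing. Since $\mathfrak{a}$ is $\mathfrak{p}$-primary, its radical $\sqrt{\mathfrak{a}}=\mathfrak{p}$ is prime and is in fact the unique minimal prime over $\mathfrak{a}$, which rules out alternative (2) of Theorem~\ref{p2subsetofideal} (that alternative requires two \emph{distinct} minimal primes). Thus alternative (1) holds and $\mathfrak{p}^2\subseteq\mathfrak{a}$. Even without isolating the case, both alternatives of that theorem assert $(\sqrt{\mathfrak{a}})^2\subseteq\mathfrak{a}$, and since $\sqrt{\mathfrak{a}}=\mathfrak{p}$ this is exactly $\mathfrak{p}^2\subseteq\mathfrak{a}$. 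This is the only place the subtractivity hypothesis is used, since Theorem~\ref{p2subsetofideal} requires it; the converse below will not.

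For the converse, assume $\mathfrak{p}^2\subseteq\mathfrak{a}$ and take $xyz\in\mathfrak{a}$, aiming to land in one of the three required memberships $xy\in\mathfrak{a}$, $yz\in\mathfrak{a}$, $xz\in\mathfrak{a}$. First I would dispose of the trivial cases: if $x\in\mathfrak{a}$ then $xy\in\mathfrak{a}$, and if $yz\in\mathfrak{a}$ we are already done, so I may assume $x\notin\mathfrak{a}$ and $yz\notin\mathfrak{a}$. Grouping $xyz=x\cdot(yz)\in\mathfrak{a}$ and applying the $\mathfrak{p}$-primary hypothesis, $x\notin\mathfrak{a}$ forces $yz\in\sqrt{\mathfrak{a}}=\mathfrak{p}$; grouping instead as $(yz)\cdot x\in\mathfrak{a}$ and using commutativity, $yz\notin\mathfrak{a}$ forces $x\in\mathfrak{p}$. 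Now primeness of $\mathfrak{p}$ applied to $yz\in\mathfrak{p}$ gives $y\in\mathfrak{p}$ or $z\in\mathfrak{p}$, and combining either with $x\in\mathfrak{p}$ and the hypothesis $\mathfrak{p}^2\subseteq\mathfrak{a}$ yields $xy\in\mathfrak{a}$ or $xz\in\mathfrak{a}$ respectively. This closes the converse.

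For the final assertion, let $\mathfrak{m}$ be a maximal ideal. Since $\sqrt{\mathfrak{m}^2}=\mathfrak{m}$ and the radical is maximal, $\mathfrak{m}^2$ is $\mathfrak{m}$-primary; taking $\mathfrak{a}=\mathfrak{m}^2$ and $\mathfrak{p}=\mathfrak{m}$, the containment $\mathfrak{p}^2=\mathfrak{m}^2\subseteq\mathfrak{a}$ holds trivially, so the equivalence just proved shows $\mathfrak{m}^2$ is $2$-absorbing. The one step that requires care, and which I expect to be the main obstacle, is the semiring analogue of the classical fact that an ideal with maximal radical is primary: the usual argument passes to $S/\mathfrak{m}^2$, observes that its unique prime $\mathfrak{m}/\mathfrak{m}^2$ makes it local, and concludes that every element outside $\mathfrak{m}$ becomes a unit. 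I would verify each of these passages in the subtractive-semiring setting (quotients, correspondence of primes, and the local characterization $S-U(S)$ being an ideal recalled in the Introduction) rather than simply quote the ring statement, or else invoke the known treatment of primary ideals of semirings that powers of a maximal ideal are primary.
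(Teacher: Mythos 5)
Your proof is correct and takes essentially the same route as the paper's: the forward direction cites Theorem \ref{p2subsetofideal} (the paper does not even bother to rule out its second alternative, which, as you note, also gives $(\sqrt{\mathfrak{a}})^2\subseteq\mathfrak{a}$ anyway), and your converse is the identical element-wise argument combining the primary property of $\mathfrak{a}$ with the primeness of $\mathfrak{p}$ and the hypothesis $\mathfrak{p}^2\subseteq\mathfrak{a}$. The one place you go beyond the paper is the ``in particular'' clause, where the paper silently assumes $\mathfrak{m}^2$ is $\mathfrak{m}$-primary while you rightly flag it as needing justification; the classical argument (write $1=m+sy$ with $m\in\mathfrak{m}$ when $y\notin\mathfrak{m}$, raise to a suitable power via the binomial expansion, and conclude $x\in\mathfrak{q}$ from $xy\in\mathfrak{q}$) uses no subtraction and so transfers verbatim to semirings, which validates your fallback of invoking the known semiring fact rather than building a quotient-semiring argument.
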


	\begin{theorem}
		
		\label{dividedprimeThm1}
		
		Let $\mathfrak{p}$ be a nonzero divided prime ideal of a subtractive semiring $S$ and $\mathfrak{a}$ be an ideal of $S$ such that $\sqrt{\mathfrak{a}}=\mathfrak{p}$. Then, the following statements are equivalent:
		\begin{enumerate}
			\item $\mathfrak{a}$ is a 2-absorbing ideal of $S$;
			\item $\mathfrak{a}$ is a $\mathfrak{p}$-primary ideal of $S$ such that $\mathfrak{p}^2 \subseteq \mathfrak{a}$.
		\end{enumerate}
	\end{theorem}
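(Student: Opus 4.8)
The plan is to treat the two implications separately, noting that the reverse direction is essentially free. For $(2)\Rightarrow(1)$, if $\mathfrak{a}$ is $\mathfrak{p}$-primary and $\mathfrak{p}^2\subseteq\mathfrak{a}$, then Theorem~\ref{m2twoabsorbing} applies directly and yields that $\mathfrak{a}$ is $2$-absorbing; the divided hypothesis plays no role in this direction.

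For $(1)\Rightarrow(2)$, suppose $\mathfrak{a}$ is $2$-absorbing with $\sqrt{\mathfrak{a}}=\mathfrak{p}$. I would first extract $\mathfrak{p}^2\subseteq\mathfrak{a}$ from Theorem~\ref{p2subsetofideal}: since $\sqrt{\mathfrak{a}}=\mathfrak{p}$ is prime, alternative~(2) of that theorem (which requires $\sqrt{\mathfrak{a}}$ to be an intersection of two \emph{distinct} minimal primes) cannot occur, so alternative~(1) holds and gives $\mathfrak{p}^2\subseteq\mathfrak{a}$. Because the equality $\sqrt{\mathfrak{a}}=\mathfrak{p}$ is already assumed, it then remains only to verify that $\mathfrak{a}$ is primary, i.e. that $xy\in\mathfrak{a}$ with $x\notin\mathfrak{a}$ forces $y\in\mathfrak{p}=\sqrt{\mathfrak{a}}$.

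I would argue this last point by contradiction, and this is where the divided hypothesis does the real work. Assume $xy\in\mathfrak{a}$, $x\notin\mathfrak{a}$, and $y\notin\mathfrak{p}$. From $xy\in\mathfrak{a}\subseteq\mathfrak{p}$, primeness of $\mathfrak{p}$ together with $y\notin\mathfrak{p}$ gives $x\in\mathfrak{p}$. Since $\mathfrak{p}$ is divided and $y\notin\mathfrak{p}$, we have $\mathfrak{p}\subset(y)$, so $x=ry$ for some $r\in S$. Then $r\cdot y\cdot y=ry^2=xy\in\mathfrak{a}$, and the $2$-absorbing property applied to the triple $(r,y,y)$ forces $ry\in\mathfrak{a}$ or $y^2\in\mathfrak{a}$. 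The first reads $x=ry\in\mathfrak{a}$, contradicting $x\notin\mathfrak{a}$; the second gives $y\in\sqrt{\mathfrak{a}}=\mathfrak{p}$, contradicting $y\notin\mathfrak{p}$. Hence $y\in\mathfrak{p}$, so $\mathfrak{a}$ is $\mathfrak{p}$-primary.

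The crux of the argument, and the step I expect to be the only genuine obstacle, is recasting the relation $xy\in\mathfrak{a}$ as an honest triple product to which Definition~\ref{2-absorbingdef} can be applied: this is exactly what the factorization $x=ry$ — itself supplied by the divided condition $\mathfrak{p}\subset(y)$ — accomplishes. I would also verify that subtractivity enters only through the cited Theorems~\ref{p2subsetofideal} and~\ref{m2twoabsorbing}, and confirm that the identity $xy=ry^2$ and the implication $y^2\in\mathfrak{a}\Rightarrow y\in\sqrt{\mathfrak{a}}$ require no hypotheses beyond those already in hand.
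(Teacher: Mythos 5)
Your proposal is correct and follows essentially the same route as the paper's own proof: both directions use Theorems~\ref{p2subsetofideal} and~\ref{m2twoabsorbing} in the same way, and the key step in $(1)\Rightarrow(2)$ --- using the divided condition $\mathfrak{p}\subset(y)$ to write $x=ry$ and then applying the $2$-absorbing property to the triple $(r,y,y)$ --- is exactly the paper's argument, merely phrased as a contradiction rather than directly. Your write-up is in fact slightly more explicit than the paper's (e.g.\ in justifying $x\in\mathfrak{p}$ via primeness and in ruling out alternative~(2) of Theorem~\ref{p2subsetofideal}).
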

	\begin{proof}
		(1) $\Rightarrow$ (2): Let $\mathfrak{a}$ be a 2-absorbing ideal of $S$. By using Theorem \ref{p2subsetofideal} and considering this point that $\sqrt{\mathfrak{a}}=\mathfrak{p}$ is a nonzero prime ideal of $S$, we have $\mathfrak{p}^2 \subseteq \mathfrak{a}$. Suppose that for some $x,y \in S$, we have $xy \in \mathfrak{a}$ and $y \notin \mathfrak{p}$. As $\mathfrak{p}$ is a divided prime ideal of $S$ and $x \in \mathfrak{p}$, we conclude that $x=my$ for some $m \in S$. Therefore, $xy=my^2 \in \mathfrak{a}$. Since $y^2 \notin \mathfrak{a}$ and $\mathfrak{a}$ is a 2-absorbing ideal of $S$, we have $my=x \in \mathfrak{a}$. Thus, $\mathfrak{a}$ is a $\mathfrak{p}$-primary ideal of $S$.
		\newline
		(2) $\Rightarrow$ (1): Using Theorem \ref{m2twoabsorbing}, the proof of this implication is obvious.
	\end{proof}
	
	\begin{theorem}
		
		\label{dividedprimeThm2}
		
		Let $\mathfrak{p}$ be a nonzero divided prime ideal of a subtractive semidomain $S$. Then, $\mathfrak{p}^2$ is a 2-absorbing ideal of $S$.
		
		\begin{proof}
			
			By Theorem \ref{dividedprimeThm1}, we only need to show that $\mathfrak{p}^2$ is a $\mathfrak{p}$-primary ideal of $S$. Let $st\in \mathfrak{p}^2$, while $s\notin \mathfrak{p}$. Suppose that $st = \Sigma^n_{i=1} x_i y_i$, where $x_i,y_i \in \mathfrak{p}$. Since $\mathfrak{p}$ is a divided prime ideal of $S$ and $s\notin \mathfrak{p}$, we have $\mathfrak{p} \subset (s)$. Now since $x_i \in \mathfrak{p}$, we have $x_i = s z_i$, for each $1\leq i \leq n$. On the other hand, since $s\notin \mathfrak{p}$, we have $z_i \in \mathfrak{p}$. Now $st = s(\Sigma^n_{i=1} z_i y_i)$. By assumption $S$ is multiplicatively cancellative. So, $t = \Sigma^n_{i=1} z_i y_i$, which means that $t\in \mathfrak{p}^2$. Hence, $\mathfrak{p}^2$ is $\mathfrak{p}$-primary and the proof is complete.
		\end{proof}
		
	\end{theorem}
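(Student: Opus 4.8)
The plan is to reduce the entire statement to Theorem \ref{dividedprimeThm1}. Since $\sqrt{\mathfrak{p}^2}=\mathfrak{p}$ (the radical of a power of a prime ideal is that prime), Theorem \ref{dividedprimeThm1} applies verbatim to the ideal $\mathfrak{a}=\mathfrak{p}^2$: it asserts that $\mathfrak{p}^2$ is 2-absorbing if and only if $\mathfrak{p}^2$ is $\mathfrak{p}$-primary and $\mathfrak{p}^2\subseteq\mathfrak{a}$. The containment $\mathfrak{p}^2\subseteq\mathfrak{p}^2$ is automatic, so all of the work collapses to a single task: verifying that $\mathfrak{p}^2$ is a $\mathfrak{p}$-primary ideal. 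Everything hinges on this one reduction, after which the 2-absorbing conclusion is immediate.

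To check primariness, I would take $s,t\in S$ with $st\in\mathfrak{p}^2$ and $s\notin\mathfrak{p}$, and aim to conclude $t\in\mathfrak{p}^2$. Writing $st=\sum_{i=1}^{n}x_i y_i$ with all $x_i,y_i\in\mathfrak{p}$, the idea is to pull a factor of $s$ out of each $x_i$. This is exactly where the divided hypothesis enters: because $s\notin\mathfrak{p}$ we have $\mathfrak{p}\subset(s)$, so each $x_i=s z_i$ for some $z_i\in S$. Since $x_i=s z_i\in\mathfrak{p}$, the ideal $\mathfrak{p}$ is prime, and $s\notin\mathfrak{p}$, primeness forces $z_i\in\mathfrak{p}$. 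Substituting back gives $st=s\left(\sum_{i=1}^{n}z_i y_i\right)$, where $\sum_{i=1}^{n}z_i y_i\in\mathfrak{p}^2$ because each $z_i$ and each $y_i$ lies in $\mathfrak{p}$.

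The closing move, and the one place where the semidomain hypothesis is genuinely indispensable, is to cancel $s$: multiplicative cancellation yields $t=\sum_{i=1}^{n}z_i y_i\in\mathfrak{p}^2$, which is what primariness demands. I do not expect a serious obstacle here; the two points that require care are invoking the divided property with the correct element $s\notin\mathfrak{p}$ to obtain the factorization, and confirming via primeness that the cofactors $z_i$ fall inside $\mathfrak{p}$ so that the rewritten sum still lands in $\mathfrak{p}^2$. Once $\mathfrak{p}^2$ is established as $\mathfrak{p}$-primary, Theorem \ref{dividedprimeThm1} immediately delivers that $\mathfrak{p}^2$ is 2-absorbing, completing the argument.
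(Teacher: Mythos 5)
Your proposal is correct and follows essentially the same route as the paper: reduce via Theorem \ref{dividedprimeThm1} to showing $\mathfrak{p}^2$ is $\mathfrak{p}$-primary, then write $st=\sum_{i=1}^n x_iy_i$, use the divided property to factor $x_i=sz_i$, use primeness to get $z_i\in\mathfrak{p}$, and cancel $s$ using the semidomain hypothesis. Your explicit verification that $\sqrt{\mathfrak{p}^2}=\mathfrak{p}$ (needed to invoke Theorem \ref{dividedprimeThm1}) is a point the paper leaves implicit, but the argument is otherwise identical.
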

	
	\begin{question}
    Is there any example of a semiring $S$ that contains a prime ideal $\mathfrak{p}$ for which $\mathfrak{p}^2$ is not 2-absorbing?
	\end{question}

     Related to the above question, we also invite the reader to check Corollary \ref{not-2-absorbing}. Now we proceed to investigate 2-absorbing ideals of valuation semirings. For doing this, we need to prove some statements for the ideals of valuation semirings. 
	
	\begin{theorem}
		
		\label{2-absorbing}
		
		Let $S$ be a subtractive valuation semiring and $\mathfrak{a}$ be a nonzero proper ideal of $S$. Then, the following statements are equivalent:
		\begin{enumerate}
			\item $\mathfrak{a}$ is a 2-absorbing ideal of $S$;
			\item $\mathfrak{a}$ is a $\mathfrak{p}$-primary ideal of $S$ such that $\mathfrak{p}^2\subseteq\mathfrak{a}$ where $\mathfrak{p} = \sqrt{\mathfrak{a}}$ is a prime ideal of $S$;
			\item $\mathfrak{a}=\mathfrak{p}$ or $\mathfrak{a}=\mathfrak{p}^2$ where $\mathfrak{p}=\sqrt\mathfrak{a}$ is a prime ideal of $S$.
		\end{enumerate}
	
	\begin{proof}
		(1) $\Rightarrow$ (2): Let $\mathfrak{a}$ be a 2-absorbing ideal of $S$. It is easy to verify that $\sqrt\mathfrak{a}=\mathfrak{p}$ is a prime ideal of $S$. Since $S$ is a valuation semiring, by Proposition \ref{valuationisdivided}, $S$ is a divided semidomain. Now by using Theorem \ref{dividedprimeThm1}, $\mathfrak{a}$ is a $\mathfrak{p}$-primary ideal of $S$ such that $\mathfrak{p}^2\subseteq\mathfrak{a}$. 
		\newline
		(2) $\Rightarrow$ (3): Let $\mathfrak{a}$ be a $\mathfrak{p}$-primary ideal of $S$ such that $\mathfrak{p}^2 \subseteq \mathfrak{a}$. As $S$ is a valuation semiring, by using Theorem \ref{idealsofvaluation2} we can conclude that either $\mathfrak{a}=\mathfrak{p}$ or $\mathfrak{a}=\mathfrak{p}^2$.
		\newline
		(3) $\Rightarrow$ (1): Suppose that either $\mathfrak{a}=\mathfrak{p}$ or $\mathfrak{a}=\mathfrak{p}^2$  where $\mathfrak{p}=\sqrt{\mathfrak{a}}$ is a prime ideal of $S$. If $\mathfrak{a}=\mathfrak{p}$, then $\mathfrak{a}$ is a 2-absorbing ideal of $S$. If $\mathfrak{a}=\mathfrak{p}^2$, then by using Theorem \ref{dividedprimeThm2}, $\mathfrak{a}$ is a 2-absorbing ideal of $S$.  
	\end{proof}

\end{theorem}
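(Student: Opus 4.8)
The plan is to prove the cycle of implications $(1) \Rightarrow (2) \Rightarrow (3) \Rightarrow (1)$, leaning on the structural results already established for valuation semirings and divided primes, so that most of the work is delegated to earlier theorems.

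For $(1) \Rightarrow (2)$, I would first check that $\sqrt{\mathfrak{a}} = \mathfrak{p}$ is prime. Since a valuation semiring has its ideals, and in particular its prime ideals, totally ordered by inclusion, the primes containing $\mathfrak{a}$ form a chain, and the intersection of a chain of primes is prime; as $\sqrt{\mathfrak{a}}$ is that intersection, it is prime. Equivalently, this excludes the second alternative of Theorem \ref{p2subsetofideal}, because two \emph{distinct} minimal primes over $\mathfrak{a}$ would have to be comparable, contradicting minimality. Since $\mathfrak{a}$ is nonzero, $\mathfrak{p} \supseteq \mathfrak{a}$ is a nonzero prime, and by Proposition \ref{valuationisdivided} it is divided. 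As $S$ is subtractive, Theorem \ref{dividedprimeThm1} then applies and yields exactly that $\mathfrak{a}$ is $\mathfrak{p}$-primary with $\mathfrak{p}^2 \subseteq \mathfrak{a}$.

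For $(2) \Rightarrow (3)$, I would split on whether $\mathfrak{p} = \mathfrak{p}^2$. If $\mathfrak{p} = \mathfrak{p}^2$, then from $\mathfrak{p}^2 \subseteq \mathfrak{a} \subseteq \sqrt{\mathfrak{a}} = \mathfrak{p}$ I immediately get $\mathfrak{a} = \mathfrak{p}$. If $\mathfrak{p} \neq \mathfrak{p}^2$, then Theorem \ref{idealsofvaluation2}(2) forces $\mathfrak{a} = \mathfrak{p}^m$ for some $m \geq 1$ (with $m \geq 1$ since $\mathfrak{a}$ is proper); combining $\mathfrak{p}^2 \subseteq \mathfrak{a} = \mathfrak{p}^m$ with the inclusion $\mathfrak{p}^m \subseteq \mathfrak{p}^2$ valid for $m \geq 2$ sandwiches $\mathfrak{a} = \mathfrak{p}^2$, while $m = 1$ gives $\mathfrak{a} = \mathfrak{p}$. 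Either way $\mathfrak{a} = \mathfrak{p}$ or $\mathfrak{a} = \mathfrak{p}^2$. For $(3) \Rightarrow (1)$, the case $\mathfrak{a} = \mathfrak{p}$ is immediate since every prime ideal is 2-absorbing, and for $\mathfrak{a} = \mathfrak{p}^2$ I invoke Theorem \ref{dividedprimeThm2}, which applies because $S$ is a subtractive semidomain (valuation semirings are semidomains by definition) and $\mathfrak{p}$ is a nonzero divided prime (nonzero since $\mathfrak{a} \neq 0$, divided by Proposition \ref{valuationisdivided}).

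The main obstacle is concentrated in $(1) \Rightarrow (2)$: the only genuine content is verifying that $\sqrt{\mathfrak{a}}$ is prime, that is, that the two-minimal-primes scenario of Theorem \ref{p2subsetofideal} is ruled out by the total ordering of ideals, after which the divided-prime machinery of Theorem \ref{dividedprimeThm1} takes over cleanly. The remaining implications are essentially bookkeeping with the classification of $\mathfrak{p}$-primary ideals as powers of $\mathfrak{p}$ together with the constraint $\mathfrak{p}^2 \subseteq \mathfrak{a}$.
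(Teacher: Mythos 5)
Your proposal is correct and follows essentially the same route as the paper: the cycle $(1)\Rightarrow(2)\Rightarrow(3)\Rightarrow(1)$ using Proposition \ref{valuationisdivided}, Theorem \ref{dividedprimeThm1}, Theorem \ref{idealsofvaluation2}, and Theorem \ref{dividedprimeThm2} in exactly the same roles. The only difference is that you spell out details the paper leaves implicit --- ruling out the two-minimal-primes case of Theorem \ref{p2subsetofideal} via the total ordering of ideals to see that $\sqrt{\mathfrak{a}}$ is prime, and the case split on $\mathfrak{p}=\mathfrak{p}^2$ in $(2)\Rightarrow(3)$ --- both of which are sound.
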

	
	\section{2-AB semirings}\label{sec:2-AB}
	
	Let us recall that a ring $R$ is called 2-AB if every 2-absorbing ideal of $R$ is prime \cite[Definition 2.1]{BennisFahid2017}. Inspired by this, we give the following definition:
	
	\begin{definition}
		
		\label{2-ABdef}
		
		We define a semiring $S$ to be a 2-AB semiring if every 2-absorbing ideal of $S$ is prime.
	\end{definition}

Let us recall that a 2-absorbing ideal $\mathfrak{p}$ of a ring $R$ is said to be a minimal 2-absorbing ideal over an ideal $\mathfrak{a}$ of $R$ if it is minimal among all 2-absorbing ideals containing $\mathfrak{a}$ \cite{MoghimiNaghani2016}. The following is the semiring version of the definition of minimal 2-absorbing ideals in commutative rings:

\begin{definition}
	
	\label{2-MinDef}
	
	Let $\mathfrak{a}$ be an ideal of a semiring $S$. We define a 2-absorbing ideal $\mathfrak{p}$ of $S$ to be a minimal 2-absorbing ideal over $\mathfrak{a}$ if there is not a 2-absorbing ideal $\mathfrak{q}$ of $S$ such that $\mathfrak{a}\subseteq\mathfrak{q}\subset\mathfrak{p}$. We denote the set of minimal 2-absorbing ideals over $\mathfrak{a}$ by $2-\Min_S(\mathfrak{a})$.
\end{definition}

\begin{lemma}
	
	\label{2minsemiring}
	
	Let $S$ be a subtractive semiring such that the prime ideals of $S$ are comparable and $\mathfrak{m}$ be the unique maximal ideal of $S$. Then, the following statements are equivalent:
	
	\begin{enumerate}
		
		\item For every minimal prime ideal $\mathfrak{p}$ over a 2-absorbing ideal $\mathfrak{a}$, $\mathfrak{am}=\mathfrak{p}$;
		
		\item For every prime ideal $\mathfrak{p}$ of $S$, $2-\Min_S(\mathfrak{p}^2)=\set{\mathfrak{p}}$.
		
	\end{enumerate}

    Moreover, if one of the above equivalent statements holds, then $S$ is 2-AB.
	
	\begin{proof}
		$(1) \Rightarrow (2)$: Let $\mathfrak{p}$ be a prime ideal of $S$ and $\mathfrak{b}$ be a 2-absorbing ideal of $S$ such that $\mathfrak{b}\in 2-\Min_S(\mathfrak{p}^2)$. First, we prove that $\mathfrak{p} \in \Min_S(\mathfrak{b})$. Suppose there exists a prime ideal $\mathfrak{q}$ such that $\mathfrak{b} \subseteq \mathfrak{q} \subseteq \mathfrak{p}$. Clearly, $\mathfrak{p}^2 \subseteq \mathfrak{b} \subseteq \mathfrak{q} \subseteq \mathfrak{p}$. Let $x\in \mathfrak{p}$. So, we have $x^2 \in \mathfrak{p}^2$ and therefore, $x^2 \in \mathfrak{q}$. Since $\mathfrak{q}$ is prime, $x \in \mathfrak{q}$. Thus, $\mathfrak{p}=\mathfrak{q}$. But $\mathfrak{b}\mathfrak{m} \subseteq \mathfrak{b} \subseteq \mathfrak{p}$. Now since by hypothesis $\mathfrak{b}\mathfrak{m}=\mathfrak{p}$, we have $\mathfrak{b}=\mathfrak{p}$.
		\newline
		$(2) \Rightarrow (1)$: Suppose that $\mathfrak{p}$ is a minimal prime ideal over a 2-absorbing ideal $\mathfrak{a}$. Since $\sqrt\mathfrak{a}$ is a prime ideal of $S$ and $\sqrt\mathfrak{a} \subseteq \mathfrak{p}$, we have $\sqrt\mathfrak{a} = \mathfrak{p}$. So, by Theorem \ref{p2subsetofideal}, $\mathfrak{p}^2 \subseteq \mathfrak{a} \subseteq \mathfrak{p}$. Now by hypothesis $2-\Min_S(\mathfrak{p}^2)=\set{\mathfrak{p}}$. Therefore, $\mathfrak{a}=\mathfrak{p}$. By Proposition \ref{2-absorbingmulti}, $\mathfrak{pm}$ is 2-absorbing. Now, $\mathfrak{p}^2 \subseteq \mathfrak{pm} \subseteq \mathfrak{p}$. So, $\mathfrak{am}=\mathfrak{pm}=\mathfrak{p}$.
		
		Now, let the statement (1) hold. If $\mathfrak{a}$ is a 2-absorbing ideal of $S$, then there is a minimal prime ideal $\mathfrak{p}$ over $\mathfrak{a}$ and clearly, we have $\mathfrak{am} \subseteq \mathfrak{a} \subseteq \mathfrak{p}$, which implies that $\mathfrak{a} = \mathfrak{p}$.
	\end{proof}
\end{lemma}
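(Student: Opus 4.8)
The plan is to handle the two equivalences and the final assertion one at a time, with Theorem \ref{p2subsetofideal} and Proposition \ref{2-absorbingmulti} as the engines and with constant use of the standing hypothesis that the prime ideals of $S$ are comparable. The first thing I would record is a reduction: if $\mathfrak{b}$ is any 2-absorbing ideal of the subtractive semiring $S$, then Theorem \ref{p2subsetofideal} leaves two possibilities for $\sqrt{\mathfrak{b}}$, but the case $\sqrt{\mathfrak{b}}=\mathfrak{p}_1\cap\mathfrak{p}_2$ with two \emph{distinct} minimal primes is incompatible with comparability (one of $\mathfrak{p}_1,\mathfrak{p}_2$ would properly contain the other and fail to be minimal over $\mathfrak{b}$). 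Hence $\sqrt{\mathfrak{b}}$ is always prime, it is the unique minimal prime over $\mathfrak{b}$, and $(\sqrt{\mathfrak{b}})^2\subseteq\mathfrak{b}$. This is exactly what lets hypothesis (1) be applied to an arbitrary 2-absorbing ideal.

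For $(1)\Rightarrow(2)$ I would fix a prime $\mathfrak{p}$, take an arbitrary $\mathfrak{b}\in 2-\Min_S(\mathfrak{p}^2)$, and show $\mathfrak{b}=\mathfrak{p}$. From $\mathfrak{p}^2\subseteq\mathfrak{b}$ I get $\mathfrak{p}=\sqrt{\mathfrak{p}^2}\subseteq\sqrt{\mathfrak{b}}=:\mathfrak{p}'$, with $\mathfrak{p}'$ prime by the reduction. Applying hypothesis (1) to $\mathfrak{b}$, whose unique minimal prime is $\mathfrak{p}'$, gives $\mathfrak{bm}=\mathfrak{p}'$; since $\mathfrak{bm}\subseteq\mathfrak{b}\subseteq\sqrt{\mathfrak{b}}=\mathfrak{p}'$, this squeezes $\mathfrak{b}=\mathfrak{p}'$, so $\mathfrak{b}$ is prime. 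Finally, were $\mathfrak{p}\subset\mathfrak{p}'=\mathfrak{b}$, then $\mathfrak{p}$ itself would be a prime, hence 2-absorbing, ideal with $\mathfrak{p}^2\subseteq\mathfrak{p}\subset\mathfrak{b}$, contradicting the minimality of $\mathfrak{b}$ over $\mathfrak{p}^2$; therefore $\mathfrak{p}=\mathfrak{p}'=\mathfrak{b}$.

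For $(2)\Rightarrow(1)$, let $\mathfrak{p}$ be a minimal prime over a 2-absorbing ideal $\mathfrak{a}$. Since $\sqrt{\mathfrak{a}}$ is prime (by the reduction) and contained in $\mathfrak{p}$, minimality forces $\sqrt{\mathfrak{a}}=\mathfrak{p}$, and Theorem \ref{p2subsetofideal} then gives $\mathfrak{p}^2\subseteq\mathfrak{a}\subseteq\mathfrak{p}$. Producing a minimal 2-absorbing ideal over $\mathfrak{p}^2$ lying inside $\mathfrak{a}$ and invoking (2), that ideal must be $\mathfrak{p}$, whence $\mathfrak{p}\subseteq\mathfrak{a}\subseteq\mathfrak{p}$ and $\mathfrak{a}=\mathfrak{p}$. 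The same squeeze applied to $\mathfrak{pm}$, which is 2-absorbing by Proposition \ref{2-absorbingmulti} and satisfies $\mathfrak{p}^2\subseteq\mathfrak{pm}\subseteq\mathfrak{p}$, yields $\mathfrak{pm}=\mathfrak{p}$, so $\mathfrak{am}=\mathfrak{pm}=\mathfrak{p}$. For the concluding clause, assuming (1): any 2-absorbing ideal $\mathfrak{a}$ has a minimal prime $\mathfrak{p}$, and $\mathfrak{am}\subseteq\mathfrak{a}\subseteq\mathfrak{p}=\mathfrak{am}$ forces $\mathfrak{a}=\mathfrak{p}$, so every 2-absorbing ideal is prime and $S$ is 2-AB.

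The step I expect to be the main obstacle is the passage, inside $(2)\Rightarrow(1)$, from ``$\mathfrak{a}$ (resp.\ $\mathfrak{pm}$) is a 2-absorbing ideal with $\mathfrak{p}^2\subseteq\mathfrak{a}\subseteq\mathfrak{p}$'' to ``$\mathfrak{a}=\mathfrak{p}$'' via $2-\Min_S(\mathfrak{p}^2)=\set{\mathfrak{p}}$, since this implicitly requires a minimal 2-absorbing ideal over $\mathfrak{p}^2$ to exist below $\mathfrak{a}$. To secure it I would first check that the intersection of a chain of 2-absorbing ideals is again 2-absorbing: if $xyz$ lies in the intersection, then for each member of the chain at least one of $xy,yz,xz$ belongs to it, and this set of ``good pairs'' is a nonempty subset of a three-element set that only shrinks as the member shrinks, so the decreasing family of nonempty subsets has a common element, which then lies in every member and hence in the intersection. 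Zorn's lemma applied to the 2-absorbing ideals between $\mathfrak{p}^2$ and $\mathfrak{a}$, ordered by reverse inclusion, then produces a minimal such ideal; it is genuinely an element of $2-\Min_S(\mathfrak{p}^2)$, because any 2-absorbing ideal strictly inside it still contains $\mathfrak{p}^2$ and still lies inside $\mathfrak{a}$.
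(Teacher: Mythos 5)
Your proof is correct, and while it runs on the same engines as the paper's proof (Theorem \ref{p2subsetofideal}, Proposition \ref{2-absorbingmulti}, and squeeze arguments of the form $\mathfrak{p}\subseteq\mathfrak{am}\subseteq\mathfrak{a}\subseteq\mathfrak{p}$), it genuinely diverges at exactly the two places where the paper's own argument is loose, and in both cases your version is the tighter one. In $(1)\Rightarrow(2)$, the paper takes $\mathfrak{b}\in 2-\Min_S(\mathfrak{p}^2)$, argues that any prime between $\mathfrak{b}$ and $\mathfrak{p}$ equals $\mathfrak{p}$, and then concludes via ``$\mathfrak{bm}\subseteq\mathfrak{b}\subseteq\mathfrak{p}$''; but the containment $\mathfrak{b}\subseteq\mathfrak{p}$ is never justified, and it is not automatic, since $\mathfrak{b}$ is not yet known to be prime and so the comparability hypothesis cannot be applied to it (minimality over $\mathfrak{p}^2$ only rules out $\mathfrak{p}\subset\mathfrak{b}$, leaving incomparability open). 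You sidestep this by applying hypothesis (1) at $\mathfrak{p}'=\sqrt{\mathfrak{b}}$, which your comparability reduction identifies as the unique minimal prime over $\mathfrak{b}$, obtaining $\mathfrak{p}'=\mathfrak{bm}\subseteq\mathfrak{b}\subseteq\mathfrak{p}'$, so that $\mathfrak{b}$ is prime \emph{first}, and only then does 2-minimality over $\mathfrak{p}^2$ force $\mathfrak{b}=\mathfrak{p}'=\mathfrak{p}$. In $(2)\Rightarrow(1)$, the paper passes directly from $\mathfrak{p}^2\subseteq\mathfrak{a}\subseteq\mathfrak{p}$ and $2-\Min_S(\mathfrak{p}^2)=\set{\mathfrak{p}}$ to $\mathfrak{a}=\mathfrak{p}$; as you correctly observe, this silently assumes that some member of $2-\Min_S(\mathfrak{p}^2)$ exists below $\mathfrak{a}$, and your Zorn's lemma argument (the intersection of a chain of 2-absorbing ideals is 2-absorbing, because the nonempty sets of ``good pairs'' form a chain of subsets of a three-element set and hence have a common element) is a correct and necessary way to supply that existence.

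One shared loose end worth noting: both your argument and the paper's establish only the inclusion $2-\Min_S(\mathfrak{p}^2)\subseteq\set{\mathfrak{p}}$ in $(1)\Rightarrow(2)$; the equality also requires $\mathfrak{p}\in 2-\Min_S(\mathfrak{p}^2)$. The paper says nothing about this, whereas your Zorn construction closes it immediately: apply it to the 2-absorbing ideals between $\mathfrak{p}^2$ and $\mathfrak{p}$ to get some $\mathfrak{b}\in 2-\Min_S(\mathfrak{p}^2)$ with $\mathfrak{b}\subseteq\mathfrak{p}$, and your own argument then shows $\mathfrak{b}=\mathfrak{p}$. So your approach not only repairs the paper's two gaps but also furnishes the one step both write-ups leave implicit.
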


\begin{corollary}
	Let $S$ be a subtractive divided semidomain with the unique maximal ideal $\mathfrak{m}$. Then, the following statements are equivalent:
	
	\begin{enumerate}
		
		\item For every minimal prime ideal $\mathfrak{p}$ over a 2-absorbing ideal $\mathfrak{a}$, $\mathfrak{am}=\mathfrak{p}$;
		
		\item For every prime ideal $\mathfrak{p}$ of $S$, $2-\Min_S(\mathfrak{p}^2)=\set{\mathfrak{p}}$.
		
	\end{enumerate} 
Moreover, if one of the above equivalent statements holds, then $S$ is 2-AB.
\end{corollary}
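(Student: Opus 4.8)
The plan is to derive this Corollary as an immediate specialization of Lemma \ref{2minsemiring}. That lemma already proves precisely the desired equivalence, together with the 2-AB conclusion, under three standing hypotheses: that $S$ is subtractive, that the prime ideals of $S$ are comparable, and that $S$ is local with a unique maximal ideal $\mathfrak{m}$. So the entire task is to verify that a subtractive divided semidomain $S$ with unique maximal ideal $\mathfrak{m}$ meets all three conditions, after which the statement transfers without further work.

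The first hypothesis is free, since $S$ is subtractive by assumption. For the remaining two, I would invoke the earlier Proposition on divided semirings, which asserts that in a divided semiring every ideal is comparable with every prime ideal, and in particular that the prime ideals of $S$ are comparable and that $S$ is local. This supplies exactly the comparability hypothesis, while the locality it produces identifies the single maximal ideal with the $\mathfrak{m}$ named in the statement. I would note in passing that the multiplicatively cancellative condition (that $S$ is a semidomain) is not actually used in the argument; it is retained only to single out a natural family of examples---valuation semirings among them---to which the conclusion applies.

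Once all three hypotheses are matched, the equivalence of statements (1) and (2), as well as the concluding assertion that $S$ is then 2-AB, carry over verbatim from Lemma \ref{2minsemiring}. I do not anticipate any genuine obstacle here: the whole content of the Corollary is the observation that ``divided'' delivers the comparability-of-primes and locality assumptions that Lemma \ref{2minsemiring} takes as given. The only point demanding a moment's attention is to confirm that the maximal ideal furnished by locality coincides with the prescribed $\mathfrak{m}$, and this is immediate because a local semiring possesses exactly one maximal ideal.
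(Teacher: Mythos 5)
Your proposal is correct and matches the paper's intent exactly: the paper states this corollary without proof, precisely because it is the immediate specialization of Lemma \ref{2minsemiring} via the proposition that a divided semiring has comparable prime ideals and is local. Your side remark that the semidomain (cancellativity) hypothesis is never actually used is also accurate.
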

	
	Now, we prove the following lemma:
	
	\begin{lemma}
		
		\label{2-ABThm1}
		
		Let $S$ be a 2-AB semiring. Then, the following statements hold:
		
		\begin{enumerate}
			\item Prime ideals of $S$ are comparable; in particular, $S$ is a local semiring.
			
			\item Let $\mathfrak{m}$ be the unique maximal ideal of $S$. If $\mathfrak{p}$ is a minimal prime over a 2-absorbing ideal $\mathfrak{a}$, then $\mathfrak{am} = \mathfrak{p}$. In particular, $\mathfrak{m}$ is an idempotent ideal of $S$.
		\end{enumerate}
		
		\begin{proof}
			(1): Let $\mathfrak{p}_1$ and $\mathfrak{p}_2$ be prime ideals of $S$. By Proposition \ref{2-absorbingintersection}, $\mathfrak{p}_1 \cap \mathfrak{p}_2$ is a 2-absorbing ideal of $S$ and since $S$ is 2-AB, $\mathfrak{p} = \mathfrak{p}_1 \cap \mathfrak{p}_2$ is prime. Therefore, either $\mathfrak{p}_1 = \mathfrak{p}_1 \cap \mathfrak{p}_2$ or $\mathfrak{p}_2 = \mathfrak{p}_1 \cap \mathfrak{p}_2$. This means that prime ideals of $S$ are comparable and so, $S$ is local.
			
			(2): Let $\mathfrak{p}$ be a minimal prime over a 2-absorbing ideal $\mathfrak{a}$. Since $S$ is 2-AB, $\mathfrak{a}$ is prime and so, $\mathfrak{p} = \mathfrak{a}$. Now, by Proposition \ref{2-absorbingmulti}, $\mathfrak{am} = \mathfrak{p}$ and this finishes the proof.
		\end{proof}
	\end{lemma}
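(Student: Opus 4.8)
The plan is to derive both statements directly from the two constructions of 2-absorbing ideals already established, namely Proposition~\ref{2-absorbingintersection} and Proposition~\ref{2-absorbingmulti}, together with the defining property of a 2-AB semiring that every 2-absorbing ideal is prime.

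For part (1), I would begin with two arbitrary prime ideals $\mathfrak{p}_1,\mathfrak{p}_2$ of $S$. By Proposition~\ref{2-absorbingintersection} their intersection $\mathfrak{p}_1\cap\mathfrak{p}_2$ is a 2-absorbing ideal, so the 2-AB hypothesis forces $\mathfrak{p}_1\cap\mathfrak{p}_2$ to be prime. The key step is then the standard observation that a prime ideal equal to an intersection of two ideals must contain one of the factors: since $\mathfrak{p}_1\mathfrak{p}_2\subseteq\mathfrak{p}_1\cap\mathfrak{p}_2$ and the latter is prime, either $\mathfrak{p}_1\subseteq\mathfrak{p}_1\cap\mathfrak{p}_2$ or $\mathfrak{p}_2\subseteq\mathfrak{p}_1\cap\mathfrak{p}_2$, and each case immediately yields $\mathfrak{p}_1\subseteq\mathfrak{p}_2$ or $\mathfrak{p}_2\subseteq\mathfrak{p}_1$. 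Thus the prime ideals of $S$ are totally ordered by inclusion. Since every maximal ideal is prime and two distinct maximal ideals can never be comparable, there is exactly one maximal ideal, so $S$ is local.

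For part (2), let $\mathfrak{p}$ be a minimal prime over a 2-absorbing ideal $\mathfrak{a}$. The 2-AB hypothesis makes $\mathfrak{a}$ itself prime; as $\mathfrak{a}$ is then a prime ideal containing $\mathfrak{a}$, minimality of $\mathfrak{p}$ forces $\mathfrak{p}=\mathfrak{a}$. Writing $\mathfrak{m}$ for the unique maximal ideal supplied by part (1), I would invoke Proposition~\ref{2-absorbingmulti}: the product $\mathfrak{p}\mathfrak{m}$ is 2-absorbing, hence prime by the 2-AB hypothesis, and that same proposition records that $\mathfrak{p}\mathfrak{m}$ is prime precisely when $\mathfrak{p}\mathfrak{m}=\mathfrak{p}$. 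Therefore $\mathfrak{a}\mathfrak{m}=\mathfrak{p}\mathfrak{m}=\mathfrak{p}$, the desired equality. The idempotence of $\mathfrak{m}$ is the special case $\mathfrak{p}=\mathfrak{m}$: here $\mathfrak{m}$ is prime and is its own minimal prime, so the identity $\mathfrak{a}\mathfrak{m}=\mathfrak{p}$ reads $\mathfrak{m}^2=\mathfrak{m}$.

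I do not expect a genuine obstacle, since the substantive work lives in Propositions~\ref{2-absorbingintersection} and~\ref{2-absorbingmulti}; the only points demanding a little care are the passage from ``$\mathfrak{p}_1\cap\mathfrak{p}_2$ prime'' to comparability in part (1), and the observation in part (2) that a 2-absorbing ideal which is already prime must coincide with its minimal prime. Once these are in hand, the conclusions follow by direct application of the cited propositions.
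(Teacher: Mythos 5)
Your proposal is correct and follows essentially the same route as the paper: Proposition~\ref{2-absorbingintersection} plus the 2-AB hypothesis gives comparability and locality in part (1), and Proposition~\ref{2-absorbingmulti} plus the 2-AB hypothesis gives $\mathfrak{am}=\mathfrak{p}$ in part (2). You merely make explicit two steps the paper compresses (deducing comparability from primeness of $\mathfrak{p}_1\cap\mathfrak{p}_2$ via $\mathfrak{p}_1\mathfrak{p}_2\subseteq\mathfrak{p}_1\cap\mathfrak{p}_2$, and the chain ``$\mathfrak{pm}$ 2-absorbing $\Rightarrow$ prime $\Rightarrow$ $\mathfrak{pm}=\mathfrak{p}$''), which is a welcome clarification rather than a departure.
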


	\begin{theorem}
		
		\label{2-ABThm2}
		
		Let $S$ be a subtractive semiring. Then the following statements are equivalent:

		\begin{enumerate}
			
			\item \label{2-ABThm2-1} The semiring $S$ is 2-AB.
			\item \label{2-ABThm2-2} (a) The prime ideals of $S$ are comparable and (b) if $\mathfrak{p}$ is a minimal prime over a 2-absorbing ideal $\mathfrak{a}$, then $\mathfrak{am} = \mathfrak{p}$, where $\mathfrak{m}$ is the unique maximal ideal of $S$.
			
			\item \label{2-ABThm2-3} (a) The prime ideals of $S$ are comparable and (b) for every prime ideal $\mathfrak{p}$ of $S$, $2-\Min_S(\mathfrak{p}^2)=\set{\mathfrak{p}}$.
		\end{enumerate}
		
		\begin{proof}
			
			According to Lemma \ref{2-ABThm1}, $(1) \Rightarrow (2)$. For the proof of $(2) \Rightarrow (1)$, let $\mathfrak{a}$ be a 2-absorbing ideal of $S$. By assumption, prime ideals of $S$ are comparable. Therefore, by Theorem \ref{p2subsetofideal}, we have $\mathfrak{p}^2 \subseteq \mathfrak{a} \subseteq \mathfrak{p}$, where $\mathfrak{p}$ is a minimal prime over $\mathfrak{a}$. Now by assumption, $\mathfrak{am} = \mathfrak{p}$. So, $\mathfrak{p} \subseteq \mathfrak{a} \cap \mathfrak{m} = \mathfrak{a}$ and the proof is complete.
		
	By Lemma \ref{2minsemiring} and this point that the statements (1) and (2) are equivalent, clearly, the statements (1) and (3) are equivalent and this finishes the proof.\end{proof}
	\end{theorem}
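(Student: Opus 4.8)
The plan is to close a cycle of equivalences by leaning on the two preparatory lemmas already in hand, rather than re-deriving their content. For $(1) \Rightarrow (2)$ I would simply invoke Lemma \ref{2-ABThm1}: if $S$ is 2-AB, then part (1) of that lemma gives comparability of the prime ideals—so in particular $S$ is local with a unique maximal ideal $\mathfrak{m}$—and part (2) gives exactly the stated condition that $\mathfrak{am} = \mathfrak{p}$ whenever $\mathfrak{p}$ is a minimal prime over a 2-absorbing ideal $\mathfrak{a}$. No extra work is needed for this implication.

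For the converse $(2) \Rightarrow (1)$, I would start from an arbitrary 2-absorbing ideal $\mathfrak{a}$ of $S$ and aim to show it is prime. The first move is to use comparability of primes to pin down the structure of $\mathfrak{a}$ via Theorem \ref{p2subsetofideal}. Of the two alternatives there, the second—in which $\sqrt{\mathfrak{a}} = \mathfrak{p}_1 \cap \mathfrak{p}_2$ for two distinct primes minimal over $\mathfrak{a}$—cannot occur, since comparability forces $\mathfrak{p}_1$ and $\mathfrak{p}_2$ to be nested, contradicting that both are minimal over $\mathfrak{a}$. Hence the first alternative applies: writing $\mathfrak{p} := \sqrt{\mathfrak{a}}$ for the (now unique) minimal prime over $\mathfrak{a}$, we have $\mathfrak{p}^2 \subseteq \mathfrak{a} \subseteq \mathfrak{p}$. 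Condition (2)(b) then yields $\mathfrak{am} = \mathfrak{p}$; and since $\mathfrak{am} \subseteq \mathfrak{a} \cap \mathfrak{m} \subseteq \mathfrak{a}$, this gives $\mathfrak{p} \subseteq \mathfrak{a}$. Combined with $\mathfrak{a} \subseteq \sqrt{\mathfrak{a}} = \mathfrak{p}$, we conclude $\mathfrak{a} = \mathfrak{p}$ is prime, so $S$ is 2-AB.

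Finally, for $(2) \Leftrightarrow (3)$ I would note that both statements share condition (a), comparability of the primes, which forces $S$ to be local with unique maximal ideal $\mathfrak{m}$; thus the hypotheses of Lemma \ref{2minsemiring} are in force. That lemma delivers precisely the equivalence of (2)(b) (namely $\mathfrak{am} = \mathfrak{p}$ for minimal primes $\mathfrak{p}$ over 2-absorbing ideals $\mathfrak{a}$) with (3)(b) (namely $2-\Min_S(\mathfrak{p}^2) = \set{\mathfrak{p}}$ for every prime $\mathfrak{p}$), whence $(2)$ and $(3)$ are equivalent and, since $(1)\Leftrightarrow(2)$ has already been shown, the whole cycle closes. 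I expect the hard part to be the subtle point inside $(2) \Rightarrow (1)$: one must explicitly use comparability to discard the two-minimal-prime alternative of Theorem \ref{p2subsetofideal}, so that the minimal prime over $\mathfrak{a}$ is genuinely unique and the clean sandwich $\mathfrak{p}^2 \subseteq \mathfrak{a} \subseteq \mathfrak{p}$ is legitimately available; everything else is bookkeeping with the inclusions $\mathfrak{am} \subseteq \mathfrak{a}$ and $\mathfrak{a} \subseteq \sqrt{\mathfrak{a}}$ together with the cited lemmas.
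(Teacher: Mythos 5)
Your proof is correct and follows essentially the same route as the paper: Lemma \ref{2-ABThm1} for $(1)\Rightarrow(2)$, Theorem \ref{p2subsetofideal} plus the inclusion $\mathfrak{am}\subseteq\mathfrak{a}$ for $(2)\Rightarrow(1)$, and Lemma \ref{2minsemiring} to link $(2)$ and $(3)$. Your only addition is spelling out why comparability rules out the two-minimal-prime alternative of Theorem \ref{p2subsetofideal} — a detail the paper leaves implicit — which is a welcome clarification but not a different argument.
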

	
	\begin{corollary}
		
		\label{not-2-absorbing}
		
		Let $S$ be a subtractive 2-AB semiring. For each prime ideal $\mathfrak{p}$ of $S$, either $\mathfrak{p} = \mathfrak{p}^2$ or $\mathfrak{p}^2$ is not 2-absorbing.
	\end{corollary}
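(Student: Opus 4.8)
The plan is to establish the equivalent contrapositive formulation: assuming that $\mathfrak{p}^2$ \emph{is} a 2-absorbing ideal of $S$, I will deduce that necessarily $\mathfrak{p}=\mathfrak{p}^2$. Since the statement to be proved is the disjunction ``$\mathfrak{p}=\mathfrak{p}^2$ or $\mathfrak{p}^2$ is not 2-absorbing'', showing that 2-absorbingness of $\mathfrak{p}^2$ already forces the idempotence $\mathfrak{p}=\mathfrak{p}^2$ clearly suffices.

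First I would invoke the defining property of a 2-AB semiring (Definition \ref{2-ABdef}): every 2-absorbing ideal of $S$ is prime. Applying this to the ideal $\mathfrak{p}^2$, which is 2-absorbing by assumption, gives that $\mathfrak{p}^2$ is a prime ideal of $S$. Here I note in passing that $\mathfrak{p}^2$ is indeed proper, since $\mathfrak{p}^2\subseteq\mathfrak{p}\subset S$, so that speaking of its primality is meaningful.

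The second and final step is the elementary observation that squaring drags the elements of $\mathfrak{p}$ into $\mathfrak{p}^2$: for any $x\in\mathfrak{p}$ we have $x^2=x\cdot x\in\mathfrak{p}^2$, and since $\mathfrak{p}^2$ is prime this forces $x\in\mathfrak{p}^2$. Hence $\mathfrak{p}\subseteq\mathfrak{p}^2$; combined with the trivial inclusion $\mathfrak{p}^2\subseteq\mathfrak{p}$, I conclude $\mathfrak{p}=\mathfrak{p}^2$, as desired.

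There is essentially no hard part here: once the 2-AB hypothesis upgrades ``2-absorbing'' to ``prime'', the primality of $\mathfrak{p}^2$ does all the work through the $x^2$ trick. I would remark that the subtractivity hypothesis is not actually used in this argument and is inherited merely to match the standing setting of the section; all that the proof requires is that $S$ be 2-AB. Finally, read together with Theorem \ref{dividedprimeThm2}, this corollary isolates the potential source of the examples requested in the preceding Question: in a 2-AB semiring, a prime ideal $\mathfrak{p}$ with $\mathfrak{p}\neq\mathfrak{p}^2$ automatically yields a square $\mathfrak{p}^2$ that fails to be 2-absorbing.
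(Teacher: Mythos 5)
Your proof is correct. The paper itself prints no argument for this statement: it is placed as a corollary of Theorem \ref{2-ABThm2}, and the intended derivation is via statement (3) of that theorem, namely that in a subtractive 2-AB semiring $2-\Min_S(\mathfrak{p}^2)=\set{\mathfrak{p}}$ for every prime $\mathfrak{p}$; if $\mathfrak{p}^2$ were 2-absorbing with $\mathfrak{p}^2\subset\mathfrak{p}$ strict, then $\mathfrak{q}=\mathfrak{p}^2$ would be a 2-absorbing ideal satisfying $\mathfrak{p}^2\subseteq\mathfrak{q}\subset\mathfrak{p}$, contradicting that $\mathfrak{p}$ is minimal 2-absorbing over $\mathfrak{p}^2$. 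Your route bypasses Theorem \ref{2-ABThm2} entirely: you use only Definition \ref{2-ABdef} to upgrade ``2-absorbing'' to ``prime'' and then the $x^2$ trick (a prime ideal containing $x^2$ contains $x$) to force $\mathfrak{p}\subseteq\mathfrak{p}^2$, hence equality. Both arguments are short, but yours is more elementary and, as you correctly observe, never invokes subtractivity, so it establishes the conclusion for an arbitrary 2-AB semiring; this shows the hypothesis ``subtractive'' in the corollary is inherited from the standing assumptions of the section (where it is genuinely needed for Theorem \ref{2-ABThm2}) rather than being necessary for this particular statement. What the paper's derivation buys is integration with the characterization by minimal 2-absorbing ideals; what yours buys is generality and self-containedness.
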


\begin{question}
	Is it true that a subtractive semiring $S$ is 2-AB if primes are comparable and if $\mathfrak{pm} = \mathfrak{p}$ for all prime ideals $\mathfrak{p}$?
\end{question}
	
	\begin{remark}
		Let $R$ be a commutative ring with a nonzero identity. By Nakayama's lemma in commutative algebra and Proposition \ref{2-absorbingmulti}, it is straightforward to see that if $R$ is a Noetherian 2-AB ring, then $R$ is a field. Now let $S$ be a Noetherian 2-AB semiring. One may ask if $S$ is a semifield. In fact, this is not the case. Consider the semiring $S=\{0,u,1\}$, where $1+u = u+1 = u$, $1+1=1$, and $u+u = u\cdot u =u$ \cite{LaGrassa1995}. The only ideals of $S$ are $(0)$, $S$, and $\{0,u\}$. Note that the ideal $\{0,u\}$ is maximal (and prime). Also, it is clear that $(0)$ is prime. Therefore, $S$ is 2-AB. Clearly, $S$ is Noetherian, but it is not a semifield, because the element $u$ is not a unit.
	\end{remark}

\begin{theorem}
	
	\label{2-ABThm3}

Let $S$ be a subtractive valuation semiring. Then, $S$ is 2-AB if and only if $\mathfrak{p}^2 = \mathfrak{p}$ for every prime ideal $\mathfrak{p}$ of $S$.
	
\begin{proof}
$(\Rightarrow)$: Suppose that $S$ is 2-AB. Since $S$ is a subtractive valuation semiring, $\mathfrak{p}^2$ is a $\mathfrak{p}$-primary ideal of $S$. Therefore, by Theorem \ref{2-absorbing}, $\mathfrak{p}^2$ is 2-absorbing and so by hypothesis, a prime ideal of $S$. This implies that $\mathfrak{p}^2 = \mathfrak{p}$.
		
$(\Leftarrow)$: This is obvious by Theorem \ref{2-absorbing}.
\end{proof}
\end{theorem}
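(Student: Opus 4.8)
The plan is to reduce both implications to the classification of 2-absorbing ideals of a subtractive valuation semiring provided by Theorem \ref{2-absorbing}, which identifies the nonzero 2-absorbing ideals with the ideals of the form $\mathfrak{p}$ or $\mathfrak{p}^2$ for $\mathfrak{p}$ prime. Once that classification is available, neither direction requires new ideas; the work is purely organizational, together with some care about degenerate ideals.

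For the implication $(\Leftarrow)$, I would start from an arbitrary 2-absorbing ideal $\mathfrak{a}$ of $S$ and argue that it is prime. The case $\mathfrak{a}=(0)$ is immediate, since $S$ is a semidomain and so $(0)$ is prime; for $\mathfrak{a}$ nonzero, Theorem \ref{2-absorbing} (1)$\Rightarrow$(3) forces $\mathfrak{a}=\mathfrak{p}$ or $\mathfrak{a}=\mathfrak{p}^2$ with $\mathfrak{p}=\sqrt{\mathfrak{a}}$ prime. In the first case $\mathfrak{a}$ is already prime, and in the second the hypothesis $\mathfrak{p}^2=\mathfrak{p}$ collapses $\mathfrak{a}$ to $\mathfrak{p}$, again prime. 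Hence every 2-absorbing ideal is prime and $S$ is 2-AB.

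For the implication $(\Rightarrow)$, I would fix a prime ideal $\mathfrak{p}$, dispose of $\mathfrak{p}=(0)$ trivially, and assume $\mathfrak{p}\neq(0)$. The key intermediate claim is that $\mathfrak{p}^2$ is a 2-absorbing ideal. To see this, note that $\mathfrak{p}$ is $\mathfrak{p}$-primary (every prime ideal is), so by Theorem \ref{idealsofvaluation2}(2) the product $\mathfrak{p}^2=\mathfrak{p}\cdot\mathfrak{p}$ is again $\mathfrak{p}$-primary; moreover $\mathfrak{p}^2$ is a nonzero proper ideal, since $\mathfrak{p}^2\subseteq\mathfrak{p}\subset S$ and any nonzero $x\in\mathfrak{p}$ gives $x^2\in\mathfrak{p}^2$ with $x^2\neq 0$ by cancellativity. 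Then Theorem \ref{2-absorbing} (2)$\Rightarrow$(1) shows $\mathfrak{p}^2$ is 2-absorbing. Since $S$ is 2-AB, $\mathfrak{p}^2$ is prime; as $\mathfrak{p}\cdot\mathfrak{p}\subseteq\mathfrak{p}^2$, primality yields $\mathfrak{p}\subseteq\mathfrak{p}^2$, and combined with the trivial inclusion $\mathfrak{p}^2\subseteq\mathfrak{p}$ we conclude $\mathfrak{p}^2=\mathfrak{p}$.

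The genuine content is entirely concentrated in Theorem \ref{2-absorbing} and Theorem \ref{idealsofvaluation2}; once those are in hand the argument is formal. The only point that needs attention—and which I expect to be the main, if minor, obstacle—is verifying that $\mathfrak{p}^2$ falls within the scope of Theorem \ref{2-absorbing}, i.e.\ that it is a \emph{nonzero} proper $\mathfrak{p}$-primary ideal, so that the classification may legitimately be invoked. This is exactly where the semidomain (cancellativity) hypothesis enters, ruling out the degenerate possibility $\mathfrak{p}^2=(0)$ with $\mathfrak{p}\neq(0)$.
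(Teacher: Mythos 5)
Your proposal is correct and follows essentially the same route as the paper: both directions reduce to the classification in Theorem \ref{2-absorbing}, with the forward direction obtaining that $\mathfrak{p}^2$ is $\mathfrak{p}$-primary (hence 2-absorbing, hence prime by the 2-AB hypothesis, hence equal to $\mathfrak{p}$). The only difference is that you make explicit what the paper leaves tacit—the appeal to Theorem \ref{idealsofvaluation2}(2) for primariness, the treatment of the zero ideal, and the check that $\mathfrak{p}^2$ is nonzero and proper so that Theorem \ref{2-absorbing} applies—which is a welcome tightening rather than a new argument.
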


\begin{remark}
In some parts of our paper, we suppose a semiring to be a (subtractive) valuation semiring. On the other hand, in some theorems, we suppose a semiring to be divided. Note that any valuation semiring is divided. Now, one may ask if there are some subtractive valuation semirings which are not rings and so we have really generalized some results in commutative algebra. We point out that examples for subtractive valuation semirings include the semiring $\Id(D)$, where $D$ is a Dedekind domain (refer to Theorem 3.8 and Proposition 3.10 in \cite{Nasehpour2018(b)}). 
\end{remark}

\subsection*{Acknowledgments}

The second named author is supported by the Department of Engineering Science at the Golpayegan University of Technology and his special thanks go to the Department for providing all necessary facilities available to him for successfully conducting this research. The authors are also grateful to the esteemed referee for her/his through reading and review which improved the paper.

\end{document}